\def\graybox(#1,#2){
\x=#1 \y=#2 
\z=\x \t=\y
\advance\z by 10 
\advance\t by 10 
\psframe[fillstyle=solid,fillcolor=lightgray,linewidth=0pt](\x,\y)(\z,\t) 
\psline[linewidth=.5pt](\x,\y)(\x,\t)(\z,\t)(\z,\y)(\x,\y)}
\def\emptygraybox(#1,#2){
\x=#1 \y=#2 
\z=\x \t=\y
\advance\z by 10 
\advance\t by 10 
\psframe[fillstyle=solid,fillcolor=lightgray,linewidth=0pt,linecolor=lightgray](\x,\y)(\z,\t)}
\def\blankbox(#1,#2){
\x=#1 \y=#2 
\z=\x \t=\y
\advance\z by 10 
\advance\t by 10 
\psframe[linewidth=0pt](\x,\y)(\z,\t)}
\renewcommand{\P}{\mathbb{P}}
\newcommand{\R}{\mathbb{R}}
\newcommand{\Z}{\mathbb{Z}}
\newcommand{\Q}{\mathbb{Q}}
\renewcommand{\setminus}{\smallsetminus}
\renewcommand{\phi}{\varphi}
\renewcommand{\tilde}{\widetilde}
\renewcommand{\hat}{\widehat}
\renewcommand{\bar}{\overline}
\newcommand{\on}{\operatorname}
\newcommand{\bull}{ {\scriptscriptstyle{\bullet}}  }
\newcommand{\bk}{\mathbf{k}}
\newcommand{\bp}{\mathbf{p}}
\newcommand{\bq}{\mathbf{q}}
\DeclareMathOperator{\rk}{rk}
\newcommand{\triple}{{\bm{\tau}}}
\newcommand{\Pf}{ \on{Pf} }
\newcommand{\rank}{ \on{rank} }
\newcommand{\Hom}{ \on{Hom} }
\newcommand{\Ker}{ \on{Ker} }
\def\cprime{\char126}   
\newtheorem{thm}{Theorem}
\newtheorem{theorem}{Theorem}[section]
\newtheorem{lemma}[theorem]{Lemma}
\newtheorem{proposition}[theorem]{Proposition}
\newtheorem*{thm*}{Theorem}
\newtheorem*{lem*}{Lemma}
\newtheorem*{prop*}{Proposition}
\newtheorem*{cor*}{Corollary}
\theoremstyle{definition}
\newtheorem*{defn*}{Definition}
\newtheorem*{rmk*}{Remark}
\begin{document}

\title[Degeneracy Loci, Pfaffians, Vexillarity]{Degeneracy Loci, Pfaffians, and Vexillary Signed Permutations in Types B, C, and D}
\date{October 5, 2012}

\author{D. Anderson}
\address{FSMP--Institut de Math\'ematiques de Jussieu, 75013
Paris, France}
\email{andersond@math.jussieu.fr}

\author{W. Fulton}
\address{Department of Mathematics,
University of Michigan,
Ann Arbor, Michigan  48109-1043, U.S.A.}
\email{wfulton@umich.edu}

\begin{abstract}
We define a notion of \emph{vexillary signed permutation} in 
types B, C, and D, corresponding to natural degeneracy loci 
for vector bundles with symmetries of those types.  We show that the classes of these loci are given by explicit Pfaffian formulas.  The Grassmannian formulas of Kazarian are important special cases, and the corresponding 
double Schubert polynomials of Ikeda, Mihalcea, and Naruse
are shown to be equal to these Pfaffians.
\end{abstract}

\maketitle

%

\section*{Introduction} 

The Giambelli-Thom-Porteous formula gives a formula for the cohomology 
(or Chow) class of a locus where two general subbundles  $E$  and  $F$  of a 
given vector bundle  $V$  meet in at least a specified dimension.  Kempf and 
Laksov \cite{KL} generalized this to give a determinantal formula for the locus 
where a given subbundle  $E$  meets each member of a flag of bundles 
$F_{\bull} = F_1 \subset F_2 \subset \ldots \subset V$; their polynomials 
have since reappeared under various names like multi-Schur or generalized 
factorial Schur polynomials.  Lascoux and Sch{\"u}tzenberger \cite{LS} defined 
double Schubert polynomials $\frak{S}_w(x,y)$  for every permutation  $w$, 
which have the stability property that the polynomial does not change 
when a permutation is regarded naturally in a larger symmetric group.
These polynomials give formulas for loci where two general flags $E_{\bull}$ and 
$F_{\bull}$ meet in given dimensions (\cite{F2}).  Lascoux and Sch{\"u}tzenberger also 
defined a notion of \emph{vexillary} permutation, whose Schubert polynomials 
are given by generalized Schur determinants, or \emph{multi-Schur determinants}: 
\[
s_{\lambda}(c(1), \ldots, c(r)) = |c(i)_{\lambda_i+j-i}|.
\]
Here $c(i) = 1 + c(i)_1 + c(i)_2 + \dots$, 
with the $c(i)_j$ commutating variables.  In \cite{F2} these loci were shown to  
correspond to degeneracy loci with particularly simple descriptions, which are 
further simplified in Section 1 of this paper.

Pragacz (\cite{P1}, \cite{P2}) showed that Schubert classes in Grassmannians of 
isotropic subspaces of a vector space with a nondegenerate 
skew-symmetric or symmetric form can be given by polynomials 
known as Schur Q and P-functions, which are certain universal 
polynomials that can be written as Pfaffians. Billey and Haiman \cite{BH} gave 
polynomials (actually power series) for Schubert varieties in flag varieties of types B, 
C, and D.   The rough state of knowledge in 1995, 
including several attempts at general formulas, can 
be found in \cite{FP}; see \cite{KT},  \cite{BKT1}, \cite{BKT2} and \cite{PR} for progress 
since then.

In a remarkable preprint \cite{K} a decade ago, Kazarian gave  
Pfaffian formulas for those degeneracy loci that describe how one 
isotropic subbundle in a bundle with a skew or symmetric form 
meets a general flag of isotropic subbundles.  His formulas are generalized 
Pfaffians, which it is natural to call \emph{multi-Schur Pfaffians}, and denote 
by $\Pf_{\lambda}(c(1), \dots, c(r))$.  Here $\lambda$ is a strict partition of 
length $r$, i.e. $\lambda_1 > \lambda_2 > \dots > \lambda_r > 0$, and $c(i) = 
1 + c(i)_1 + c(i)_2 + \dots$, with commuting variables as before; in addition, 
one requires that the matrix with $(k,l)$-entry 
\[
c(k)_{\lambda_k} c(l)_{\lambda_l} + 2 \sum_{j=1}^{\lambda_l} (-1)^j
 c(k)_{\lambda_k + j} c(l)_{\lambda_l - j}
 \] 
 be skew-symmetric.  When $r$ is even, $\Pf_{\lambda}(c(1), \dots, c(r))$ is the 
 Pfaffian of this matrix.  When $r$ is odd, 
 \[\Pf_{\lambda}(c(1), \dots, c(r)) = \sum_{k=1}^r (-1)^{k-1} c(k)_{\lambda_k} 
 \Pf_{\lambda_1  \ldots \hat{\lambda_k} \ldots \lambda_r}(c(1), \dots  \hat{c(k)} \dots, c(r)).
 \]
Alternatively, one can set $\lambda_{r+1} = 0$ and use the formula in the even case.  
In particular, $\Pf_{\lambda_1}(c(1)) = c(1)_{\lambda_1}$, and 
a Pfaffian for the empty partition is $1$.  (Basic results about Pfaffians can 
be found in \cite{Knuth}, \cite{K}, and Appendix D of \cite{FP}.)
 
More recently, Ikeda, Mihalcea, and Naruse (\cite{I}, \cite{IN}, \cite{IMN}) have 
succeeded in defining general double Schubert polynomials for 
signed permutations of types B, C, and D, which have the same 
stability property as in type A. Their work is 
in the context of equivariant cohomology, but this is very close 
to the general degeneracy loci setting.  Their work grows from that of 
Billey and Haiman, and the corresponding 
single polynomials (for ordinary cohomology) are exactly those of \cite{BH}.  
They show that those corresponding to the longest elements in any of the finite 
Weyl groups are Pfaffians, and they show that some of Kazarian's 
formulas (for Lagrangian subbundles in type C) are special cases of 
theirs.  Modified forms of multivariable versions of the Schur P- and Q-functions, 
which had been developed by Ivanov \cite{Iv}, play an important role in their work.

In this paper we define a notion of vexillary signed permutation 
in types B, C, and D.  We show that they correspond to degeneracy loci 
of a particularly simple form, and we show that the double Schubert 
polynomials for 
their classes are given by explicit Pfaffians.  These include all the 
Pfaffians in \cite{IMN} and \cite{K} as special cases.
  
Billey and T.~K.~Lam \cite{BL} defined a notion of vexillary in types B, 
C, and D, by requiring the corresponding single Schubert polynomials of 
\cite{BH} to be equal to the corresponding Stanley symmetric 
polynomial.  Our vexillary elements form subsets of theirs, 
which coincide in type B. 
Billey and Lam ask if there is a class whose 
polynomials are Pfaffians, which is what we answer here, for double 
as well as single polynomials.

\smallskip

We describe our main result in type C, which requires 
reviewing some language from \cite{IMN}; details will appear in Section 2. 
A key role is played by the 
ring $\Gamma$, which is a quotient of a polynomial ring in 
indeterminates $Q_k$, with one generator and relation for each positive integer $k$:
\[
 \Gamma = \Z[Q_1, Q_2, \dots]/(Q_k^2 + 2\sum_{j=1}^k (-1)^j Q_{k+j}Q_{k-j}, \,\, k = 1, 2, \dots) .
\]
We denote by the same $Q_k$ the image of $Q_k$ in $\Gamma$, with $Q_0 = 1$.  These 
relations say that $Q \cdot Q^* = 1$, where $Q$ is the power series $1 + Q_1 + 
Q_2 + \dots$, and $Q^* = 1 - Q_1 + Q_2 - \dots$.  We can therefore 
substitute for $Q_i$ the corresponding term $A_i$ of any power series $A = 
1 + A_1 + A_2 + \dots$ that satisfies the identity $A \cdot A^* = 1$.
\footnote{If one takes $A = \prod_{i=1}^{\infty} (1+x_i)/(1-x_i)$, as done in \cite{IMN}, 
one gets an embedding of $\Gamma$ in the ring of symmetric functions in the $x$ variables, 
but we need specializations that are not of this form.  And we use $x$ (and $y$) variables for 
Chern classes, to maintain the analogy with Schubert polynomials of type A.}

For $k \neq l$,  set
\[
Q_{k \, l} = Q_kQ_l +  2\sum_{j=1}^l (-1)^j Q_{k+j}Q_{l-j} .
\] 
The relations say that $Q_{k \, k} = 0$ and $Q_{k \, l} = -Q_{l \, k}$.  Hence, for any strict 
partition $\lambda = \lambda_1 > \dots > \lambda_r > 0$, the matrix with 
$(i,j)$-entry $Q_{\lambda_i \, \lambda_j}$ is skew symmetric, so we can form 
its Pfaffian, which is denoted by $Q_\lambda$; equivalently, 
$Q_\lambda = \Pf_\lambda(Q, \dots, Q)$.  
These $Q_\lambda$, as  $\lambda$ varies over all strict partitions, 
form a basis for $\Gamma$ over $\Z$.  

For any indeterminates $t_1, t_2, \dots $, 
we write $\Gamma[t]$ for   $\Gamma[t_1, t_2, \dots ]$, and call elements of 
such a ring \emph{polynomials}, a slight abuse of language.  We require the 
\emph{multivariate} version of these $Q_\lambda$
polynomials, due to Ivanov [Iv],
which are elements in the ring $\Gamma[t]$; we denote them by $\Q_\lambda$ 
or $\Q_\lambda(t)$, 
a departure from the notation of \cite{Iv} or \cite{IMN}.  These can also be 
defined as Pfaffians, which is the approach we take, completing somewhat the 
treatment in \cite{Iv} and \cite{IMN}.  First
define $q(k) = Q \cdot \prod_{j=1}^{k-1}(1 + t_j)$, regarding $Q = 1 + Q_1 + \dots$ 
as a power series, so $q(k)_p = Q_p + e_1 Q_{p-1} + e_2 Q_{p-2} + \ldots$, where  
$e_i = e_i(t_1, \dots, t_{k-1})$ is the $i^{\text th}$ elementary symmetric 
polynomial in these variables..  
Set   
\[
\Q_k = q(k)_k  = Q_k  + e_1 Q_{k-1} + e_2 Q_{k-2} + \dots + e_{k-1}Q_1,
\]
Then, for nonnegative integers $k$ and $l$, define 
\[
\Q_{k\,l} = q(k)_k q(l)_l + 2 \sum_{j=1}^l (-1)^j q(k)_{k+j} q(l)_{l-j} ,
\]
One verifies that  $\Q_{k\,k} = 0$ and $ \Q_{l\,k} = -\Q_{k\,l} $, so we can
define the general $\Q_\lambda$ to be the Pfaffian of the skew-symmetric 
matrix whose $(i,j)$-entry is $\Q_{\lambda_i \, \lambda_j}$.  Equivalently, 
 $\Q_\lambda = \Pf_{\lambda}(c(1), \dots, c(r))$, where $c(k) = q(\lambda_k) = 
 Q \cdot \prod_{j=1}^{\lambda_k-1}(1 + t_j)$.
 \footnote{
Our polynomial $\Q_\lambda$ is what is denoted by $Q_\lambda(x | -t)$ in 
\cite{Iv} and \cite{IMN}, and our $Q_\lambda$ is their $Q_\lambda(x)$.  
We have eliminated the $x$ variables, which they use to embed 
$\Gamma$ in a ring of symmetric functions -- for which we have no use --
 and we have changed the signs of the $t$ variables, to avoid having to keep track of 
 unnecessary signs.  These formulas are more explicit than those given in  \cite{IMN}, 
see Proposition 2.6 below.}

\smallskip

The polynomials we will construct for vexillary signed permutations in type C, and all the 
double Schubert polynomials of type C,  are elements 
in the polynomial ring $\Gamma[x,y] = \Gamma[x_1, x_2, \dots, y_1, y_2, \dots ]$ 
in two sets of variables.  In the vexillary case, they can be obtained by substituting some of the 
$x$'s and $y$'s for some of the $t$'s in these multi-Schur Pfaffians.
We define these vexillary signed permutations and their polynomials 
from the notion of a {\bf triple of type C}, by which we mean 
a triple $\bm{\tau} = (\bf{k}, \bf{p}, \bf{q})$ 
of sequences of positive integers of the same length  $s$, 
\[
{\bf{k}} \, = \,   k_1 < \dots < k_s, \,\,\,\,\,\, {\bf{p}} \, = \,  p_1 \geq \dots \geq p_s , \,\,\,\,\,\,
{\bf{q}} \, = \, q_1 \geq \dots \geq  q_s ,
\]
satisfying the conditions that  
\[
\tag{*} \,\,\,\,\, (p_i - p_{i+1}) + (q_i - q_{i+1}) \, > \, k_{i+1} - k_i 
\]
for $1 \leq i \leq s-1$.  To each triple one can write down a signed 
permutation $w(\bm{\tau})$, which can be defined as the $w$ of minimal length such 
that, for each $i$ from $1$ to $s$, the number of $a$ with $a \geq p_i$ 
such that  $w(a)$ is a barred integer $\bar{b}$ with $b \geq q_i$, is equal to 
$k_i$.  These are our vexillary signed permutations of type C.  
Such a triple defines a strict partition $\lambda = \lambda({\bm{\tau}})$; it is 
the strict partition with  $\lambda_{k_i} = p_i + q_i - 1$, and minimal otherwise; that is, 
\[
 \lambda_k = \lambda_{k_i} + k_i - k ,
\] 
where  $k_i$ is the minimal integer in the sequence $\bf{k}$ with $k \leq k_i$. 

We define a polynomial ${\bf{Q}}_{\bm{\tau}}(x,y)$ in $\Gamma[x,y]$ by substituting 
$x_1, \dots, x_{p_s - 1}$ and $y_1, \dots, y_{q_s - 1}$ (in any order) for the first 
$\lambda_{k_s} - 1$ values of $t_1, t_2, \dots $ in $\Q_{\lambda}(t)$;  then substitute 
the next  $p_{s-1} - p_s$ $x_i$'s and the next $q_{s-1} - q_s$ $y_i$'s, in any order, 
and continue until the values $x_1, \dots, x_{p_1 - 1}$ and $y_1, \dots, y_{q_1 - 1}$
have been substituted.  Equivalently, set
\[
 c(k_i) = Q \cdot \prod_{j=1}^{p_i - 1} (1 + x_j) \cdot \prod_{j=1}^{q_i - 1} (1 + y_j), 
\]
and set $c(k) = c(k_i)$, where $k_i$ is minimal greater than or equal to $k$. 
Then ${\bf{Q}}_{\bm{\tau}}(x,y) = \Pf_{\lambda}(c(1), \dots, c(k_s))$, 
the Pfaffian of the matrix whose $(k,l)$ entry is 
$c(k)_{\lambda_k} c(l)_{\lambda_l} + 2 \sum_{j=1}^{\lambda_l} (-1)^j c(k)_{{\lambda_k}+j} c(l)_{{\lambda_l}-j}$.

These polynomials are used to construct the double Schubert polynomials of type C, 
for any element in the infinite Weyl group $W = \bigcup W_n$, i.e., to any signed permutation $w$, as follows.  
We denote\footnote{The polynomial we denote by  $\frak{C}_w(x,y)$ is corresponds to what would be called $\frak{C}_w (x, -y; z)$ in \cite{IMN} --- 
 although the notation they use for their variables is different, and 
 they write  $\frak{C}_w (z, t; x)$, so their $z_i$ is our $x_i$, 
 their $t_i$ is our $-y_i$, and their $x_i$ is the 
``dummy'' variable that we avoid using.   Similarly for $\frak{B}_w(x,y)$ and 
$\frak{D}_w(x,y)$ in types B and D. Our sign convention 
makes all the monomials appearing in all Schubert polynomials of 
types B, C, and D positive.} 
  these polynomials, which live in $\Gamma[x,y]$, by $\frak{C}_w(x,y)$.  If $\bm{\tau}$ is the 
triple with ${\bf{k}} = 1 \, 2 \, \dots \, n$ and  ${\bf{p}} = {\bf{q}} = n \,\, n\!-\!1 \, \dots \, 1$, then
$w(\bm{\tau})$  is the element $w_\circ^{(n)}$ of longest length in $W_n$.  
In this case $\lambda = (2n\!-\!1, 2n\!-\!3, \dots, 1)$.  The double Schubert polynomial $\frak{C}_{w_\circ^{(n)}}(x,y)$ can 
be defined to be the polynomial ${\bf{Q}}_{\bm{\tau}}(x,y)$.  For any other $w$, choose $n$ so that $w$ is in 
$W_n$, and write $w = w_\circ^{(n)} \cdot s_{i_1} \dots s_{i_{\ell}}$ where the $s_i$ are the standard generators, 
and the length of $w$ is $\ell$ less than the length of $w_\circ^{(n)}$, and then set 
\[
 \frak{C}_w(x,y) = \partial_{i_{\ell}} \circ \dots \circ \partial_{i_1} (\frak{C}_{w_\circ^{(n)}}(x,y)).
\]
The key in \cite{IMN} to making this work is showing that this polynomial is independent of the 
choice of $n$, which is a corollary of our theorem. In fact,  $\Gamma[x,y]$ is canonically isomorphic 
to the graded inverse limit of equivariant cohomology rings of the corresponding flag varieties, as 
the rank $n$ goes to infinity; proving this was a key step in \cite{IMN}; it also follows from our theorem, 
since the Schubert classes form a basis for both rings over $\Z[y]$.

The geometric setting for the 
degeneracy loci is a vector bundle $V$ of some even rank $2n$, 
on a nonsingular variety $X$; $V$ has
a non-degenerate skew form, together with two flags of 
isotropic subbundles $E = E_1 \supset E_2 \supset \dots$ and 
$F = F_1 \supset F_2 \supset \dots$, with $E_i$ and $F_i$ of rank 
$n+1-i$.  In particular,  $E_1$ and $F_1$ are maximal isotropic 
(Lagrangian) subbundles.  The degeneracy locus $\Omega_{\bm{\tau}}$ 
for a triple is defined by the conditions that the dimension of the intersection 
of the fibers $E_{p_i}(x) \cap F_{q_i}(x)$ is at least $k_i$ for each $i$ from $1$ to $s$, 
and every point $x$ of $X$.  We write this condition as 
\[
\dim \, ( \, E_{p_i} \cap F_{q_i} \, ) \, \geq \, k_i, \,\,\,\,\,\, 1 \leq i \leq s.
\]

The non-degenerate form on $V$ makes $V/E$ the dual bundle of $E$, and 
$V/F$ the dual bundle of $F$.  In particular, 
$c(V-E-F) = c(V)/c(E)c(F) = c(E^*)/c(F) = c(F^*)/c(E)$.  
The identities defining $\Gamma$ imply that we may send $Q$ to $c(V-E-F)$, 
i.e., $Q_i$ to $c_i(V-E-F)$.   Set
\[
  x_i = c_1(E_i/E_{i-1}) \,\,\,\,  \text{ and } \,\,\,\, y_i = c_1(F_i/F_{i-1}).
\]

\begin{thm}\label{t.theorem1}
{\rm (1)} The class of $\Omega_{\bm{\tau}}$ is obtained by the substitution of 
$c(V-E-F)$ for $Q$ in ${\bf{Q}}_{\bm{\tau}}(x,y)$. 

\smallskip
\noindent
{\rm (2)} The polynomial ${\bf{Q}}_{\bm{\tau}}(x,y)$ is equal to the double Schubert polynomial 
$\frak{C}_w(x,y)$, where $w = w(\bm{\tau})$.
\end{thm}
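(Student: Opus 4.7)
My plan is to establish part (1) geometrically by constructing a resolution of $\Omega_{\bm{\tau}}$ as a tower of isotropic Grassmann bundles, and then to derive part (2) by a basis argument in equivariant cohomology. This mirrors the type-A approach of \cite{F2}, with the Pfaffian Gysin formulas of \cite{K} playing the role of the determinantal Gysin formulas of Kempf--Laksov.

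For part (1), I would build a tower of isotropic Grassmann bundles
\[
 Z_s \to Z_{s-1} \to \dots \to Z_1 \to X,
\]
where $Z_1 \to X$ parametrizes rank-$k_1$ isotropic subbundles $K_1$ of $E_{p_1} \cap F_{q_1}$, and inductively $Z_{i+1} \to Z_i$ parametrizes rank-$k_{i+1}$ subbundles $K_{i+1}$ of the pullback of $E_{p_{i+1}} \cap F_{q_{i+1}}$ containing the tautological $K_i$. The condition $(*)$ is engineered precisely so that each stage has the expected fiber dimension and the composite $\pi \colon Z_s \to X$ is birational onto $\Omega_{\bm{\tau}}$ of total codimension $|\lambda|$.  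Then $[\Omega_{\bm{\tau}}] = \pi_{*}[Z_s]$ is computed by successive Pfaffian Gysin integrations. The key point is that pushing through the $i$-th stage contributes exactly the Chern roots of $E_{p_i}/E_{p_{i+1}}$ and $F_{q_i}/F_{q_{i+1}}$, which after substitution correspond to the $x$- and $y$-variables introduced at rank-level $k_i$ in the staircase defining ${\bf{Q}}_{\bm{\tau}}(x,y)$.  The outermost stage reduces to Kazarian's Pfaffian pushforward from \cite{K} for a single isotropic rank condition.

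Part (2) is then deduced from (1) by a universal argument.  Specialized to a sufficiently generic $X$ (for instance, an approximating tower of flag bundles modelling the equivariant classifying space of the relevant group), part (1) shows that ${\bf{Q}}_{\bm{\tau}}(x,y)$ represents the class of the Schubert variety $X_{w(\bm{\tau})}$ in the equivariant cohomology of the ambient flag bundle.  By construction in \cite{IMN}, $\frak{C}_{w(\bm{\tau})}(x,y)$ represents the same class.  Since the Schubert classes form a $\Z[y]$-basis for equivariant cohomology and the canonical map from $\Gamma[x,y]$ to the inverse limit of these rings (the isomorphism mentioned just before the theorem) is injective on the $\Z[y]$-span of Schubert polynomials, the two polynomials must coincide in $\Gamma[x,y]$.

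The main obstacle will be establishing the Pfaffian Gysin formula for the isotropic flag tower in a form compatible with the staircase substitution.  Unlike the determinantal situation in type A \cite{F2}, where the pushforward identity reduces to a clean Schur-function computation, the Pfaffian case is sensitive to sign conventions and to the order in which pairs of variables are introduced; one must verify that the shift-by-elementary-symmetric-function identities of Section 2 (particularly Proposition 2.6) geometrically implement the successive push-forwards correctly.  A secondary but combinatorially delicate issue is proving that the rank-condition description of $w(\bm{\tau})$ identifies $\Omega_{\bm{\tau}}$ precisely with the Schubert variety $X_{w(\bm{\tau})}$, and that condition $(*)$ removes all redundant rank conditions in the triple so that $\bm{\tau}$ really is the ``minimal'' data cutting out that Schubert variety.
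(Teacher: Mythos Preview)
Your construction for part (1) has a genuine gap: the fiberwise intersection $E_{p_i} \cap F_{q_i}$ is not a vector bundle on $X$---its rank jumps precisely along the degeneracy loci you are trying to study---so one cannot form a Grassmann bundle of its rank-$k_i$ subbundles over $X$.  Moreover, the Gysin formula of \cite{K} you want to invoke is not a pushforward for isotropic Grassmann bundles at all; it is a formula for a tower of ordinary \emph{projective} bundles, and the Pfaffian emerges from the combinatorics of pushing down one line bundle at a time (the $h_k$ in Lemma~\ref{l.typeCstep}).

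The paper's argument supplies two ingredients you are missing.  First, one uses redundant triples to reduce to the case $k_i=i$ for all $i$, so that every step in the tower has relative dimension one; the Pfaffian is unchanged by this reduction (Lemma~2.2).  Second, the tower is built \emph{asymmetrically} in $E$ and $F$: one forms the honest flag bundle of $D_1\subset D_2\subset\cdots\subset D_s$ with $D_i$ of rank $i$ and $D_i\subset F_{q_i}$ (well-defined, since $F_{q_i}$ is a bundle on $X$), and then $Z_k$ is the locus inside this tower where in addition $D_k\subset E_{p_k}$.  Each $Z_k$ is cut out by a section of $\Hom(D_k/D_{k-1},\,D_{k-1}^\perp/E_{p_k})$, and its class is a single top Chern class.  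Pushing these down one projective bundle at a time via Lemma~\ref{l.typeCstep} and Kazarian's algebraic identity yields the Pfaffian $\Pf_\lambda(c(1),\dots,c(r))$ directly.

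Your argument for part (2) is close in spirit to the paper's, but watch for circularity: you invoke the isomorphism of $\Gamma[x,y]$ with the inverse limit of equivariant cohomology rings, yet the paper presents that isomorphism as a \emph{consequence} of Theorem~\ref{t.theorem1}, not an input.  You may of course cite the independent proof in \cite{IMN}, but the paper's point is that once (1) is established for the longest elements $w_\circ^{(n)}$ (which are vexillary), the stability of $\frak{C}_{w_\circ^{(n)}}$ under the inclusions $W_n\hookrightarrow W_{n+1}$ is immediate from the Pfaffian formula, and (2) then follows from the definition of $\frak{C}_w$ via divided differences without appealing to \cite{IMN}.
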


The result in (1) can be written as follows:  Set
\[
c(k_i) = c(V-E-F) \cdot c(V/E_{p_i}) \cdot c(V/F_{q_i}),
\] 
and set $c(k) = c(k_i)$, where $k_i$ is minimal greater than or equal to $k$.  Then the class of 
$\Omega_{\bm{\tau}}$ is the Pfaffian $\Pf_\lambda(c(1), \dots , c(r))$, with $r = k_s$.
In fact, one need not have all the bundles present in the flags: it suffices to have each $E_{p_i}$ and $F_{q_i}$ of 
the corresponding codimensions.

\smallskip

When we say that a polynomial $P$ is the class of a degeneracy locus $\Omega$, it should be 
interpreted in the following sense.  When $\Omega$ has the expected codimension (the degree 
of $P$), this is meant literally: for a specified scheme structure on $\Omega$, $[\Omega] = P$ in the 
cohomology or Chow ring of $X$, where the brackets denote the cycle class of a scheme (each 
irreducible component is counted with multiplicity given by the length of its local ring at its generic 
point, as in \cite{F1}).  In particular, this is the case when the bundles are in general 
position.  In general, one can start with $V$ and $F_\bullet$ on $X$, and 
let $\tilde{X} \to X$ be the corresponding flag bundle, with tautological flag $\tilde{E}_\bullet$.  The bundles 
$\tilde{E}_\bullet$ and the pullback of $F_\bullet$  are always in general position. 
The general case is deduced from this. 
In fact, a flag $E_\bullet$ on $X$ corresponds to a section $s \colon X \to \tilde{X}$, 
with $s^{-1}(\tilde{E}_\bullet) = E_\bullet$.  Each 
degeneracy locus $\Omega$ on $X$ is the pullback of the corresponding 
locus $\widetilde{\Omega}$ on $\tilde{X}$. 
Whenever the codimension of $\Omega$ is the expected one, then 
\[
s^*[\widetilde{\Omega}] = [\Omega].
\]
This uses the fact that all these loci, in the corresponding flag bundle, are Cohen-Macaulay. 
These points are discussed in  \cite{F1} \S14, where the refined pullback is used
to construct a class $s^![\widetilde{\Omega}]$ living on $\Omega$, without any hypotheses 
of generality or codimension;  in current language, this can be regarded as the virtual class 
$[\Omega]^{\text{vir}}$ of the locus.   In types A and C, the loci get their scheme structure from 
the (local) vanishing of minors of appropriate ranks.  In type B these may vanish to higher order, 
so $\widetilde{\Omega}$ must be given its reduced structure; and in type D, one must also take 
this locus to be the closure of the locus where all inequalities are equalities.  
The locus $\widetilde{\Omega}$ 
is always reduced and irreducible (provided $X$ is connected, as we assume).  One can also, 
more symmetrically, use flag bundles for both $E_\bullet$ and $F_\bullet$, but this is not necessary.

Note that 
this setting of a flag bundle  $\tilde{X} \to X$ contains the setting for equivariant cohomology, where 
$X$ is (a finite-dimensional approximation to) the classifying space for a torus.  
Although the equivariant setting is a special case of the degeneracy locus setting, the 
Chern classes $y_i$ in that setting are 
algebraically independent, so that any formulas found in the equivariant case are sure to 
be the same in the general degeneracy locus case.

\smallskip

One contribution of this paper is to make the formulas 
of \cite{IMN} more explicit, with direct, geometric proofs.  The bulk of \cite{IMN} depends on localization in equivariant cohomology, together with inductions involving Chevalley and 
transition formulas.  Although localization is available in the 
general setting of degeneracy loci, we do not need it for our main results.
\smallskip

In the definition of a triple, it is useful to allow the strict inequalities of (*) to be replaced by weak inequalities; we call such a triple a {\bf redundant triple}.  Any redundant triple can be reduced to an ordinarary triple by omitting any term $k_i, p_i, q_i$ for which equality holds in (*), and the signed permutation doesn't change by this reduction.  All the above constructions 
and the theorem apply to redundant triples.  For the degeneracy loci, this allows more 
inequalities on dimensions of intersections, although the added ones are in fact redundant.  Note that the terms in matrices for the Pfaffians change, but, in fact, the resulting polynomials are identical.

The case of Theorem 1 when all $p_i$ are equal, so one is measuring how one bundle $E_p$ meets a given flag of some $F_q$'s, is the result of Kazarian \cite{K} in type C.  The loci that are vexillary include all Grassmannian loci for Grassmannians of maximal isotropic subspaces, but not all for Grassmannians of lower dimensional subspaces.  (The nonvexillary loci involve intersections of $E_p$ with duals of some $F_q$'s.)  The maximal elements in each finite Weyl group are vexillary, and the inverse of any vexillary element is vexillary.\footnote{
Our vexillary signed permutations have all their positive values occurring in increasing order, while those for submaximal subspaces would allow two groups of increasing positive values.  The only nonvexillary element of $W_3$ that is not ruled out by having increasing positive values is $w = \bar{3} \, 2 \, \bar{1}$.  Its top term is $Q_{3 \, 2} + Q_{4 \, 1}$, so it cannot be a Pfaffian of the kind we have been considering. 
Of the 48 elements in $W_3$ of type C (or B),  $33$ are vexillary; 19 are 
accounted for by Kazarian (and their duals), with $2$ more in \cite{IMN}.  
Of the $24$ elements in  $W_3$  of type D,  $18$ are vexillary, with 
$9$ from \cite{K}, $2$ more from \cite{IMN}.  As the size of the Weyl group grows, 
the ratio of our new formulas to the previous formulas grows rapidly, 
but, as in type A, the percentage of elements that are not vexillary increases 
even more rapidly.}

\smallskip

The general double Schubert polynomial formula for general degeneracy loci in type C can be deduced from this as follows, as in \cite{BGG} and \cite{D} in the case of flag bundles, and \cite{F2} and \cite{F3} for degeneracy loci.  First complete the flag $F_\bullet$ to a complete flag, by setting $F_{-i} = (F_{i+1})^\perp$ for $i$ positive.  Given a signed permutation $w$ in $W_n$, we have $\Omega_w$, the locus where the dimension of $E_p \cap F_q$ is at least the number of $a \geq p$ such that $-w(a) \geq q$, for $p \in \{1, \dots ,n\}$ and $q \in \{\pm 1, \dots ,\pm n\}$.  In type C, 
all these loci get their scheme structure by the vanishing of the minors that describe the rank conditions.

\begin{cor*}
The class of $\Omega_w$  is the polynomial $\frak{C}_w(x,y)$.
\end{cor*}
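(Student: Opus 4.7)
The plan is to bootstrap from Theorem~\ref{t.theorem1} (the vexillary case) to arbitrary $w \in W_n$ by running the standard divided-difference recursion on both sides, matching the algebraic definition of $\frak{C}_w(x,y)$ with a geometric recursion on the classes $[\Omega_w]$. I would work in the complete-flag setting indicated in the excerpt, where $F_\bullet$ has been extended using $F_{-i} = F_{i+1}^\perp$, so that the rank conditions defining $\Omega_w$ make sense for all signed permutations $w \in W_n$.

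\textbf{Base case.} The longest element $w_\circ^{(n)}$ sends $a \mapsto -a$, so $-w_\circ^{(n)}(a) = a$. For $1 \le p, q \le n$, the number of $a \ge p$ with $-w_\circ^{(n)}(a) \ge q$ is $n+1-\max(p,q)$, and one checks that these conditions (as $p,q$ vary) are all implied by the diagonal ones $\dim(E_p \cap F_p) \ge n+1-p$. Hence $\Omega_{w_\circ^{(n)}}$ coincides with the locus $\Omega_{\bm{\tau}}$ associated to the triple $\mathbf{k} = (1,2,\dots,n)$, $\mathbf{p} = \mathbf{q} = (n,n-1,\dots,1)$. The corresponding $w(\bm{\tau})$ is $w_\circ^{(n)}$, and Theorem~\ref{t.theorem1} gives $[\Omega_{w_\circ^{(n)}}] = {\bf{Q}}_{\bm{\tau}}(x,y) = \frak{C}_{w_\circ^{(n)}}(x,y)$.

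\textbf{Inductive step.} Given $w = w_\circ^{(n)} s_{i_1} \cdots s_{i_\ell}$ with $\ell(w) = \ell(w_\circ^{(n)}) - \ell$, I need to show that the divided-difference operator $\partial_i$ transports the class of one locus to another: if $\ell(w s_i) < \ell(w)$, then
\[
\partial_i \bigl( [\Omega_w] \bigr) = [\Omega_{w s_i}].
\]
To establish this, I would reduce to the universal setting of flag bundles, so that the needed general position holds. Let $\pi \colon \tilde X \to \tilde X_i$ be the $\mathbb{P}^1$-fibration obtained by forgetting $E_i$ (or, for $s_0$ in type C, the Lagrangian subbundle $E_1$, handled by the appropriate $\mathbb{P}^1$-fibration for the long root). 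The operator $\partial_i$ is realized on the Chow/cohomology ring as $\pi^* \pi_*$, and the degeneracy locus $\Omega_{ws_i}$ coincides, via $\pi$, with the image $\pi(\Omega_w)$, with $\pi$ restricting to a birational (indeed $\mathbb{P}^1$-to-point over the open part) map. This pushforward--pullback identity is the BGG/Demazure mechanism used in \cite{BGG}, \cite{D} for flag bundles and adapted in \cite{F2}, \cite{F3} to degeneracy loci in type A; the same argument applies here once one knows (as stated in the excerpt) that the $\Omega_w$ in type C are cut out by explicit rank/minor conditions and are Cohen--Macaulay, so that $[\Omega_w] = \pi^*[\Omega_{ws_i}]$ and the push-pull identity is valid.

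\textbf{Conclusion and main obstacle.} Iterating the step $\ell$ times starting from the base case gives
\[
[\Omega_w] \;=\; \partial_{i_\ell} \cdots \partial_{i_1}\bigl( [\Omega_{w_\circ^{(n)}}] \bigr) \;=\; \partial_{i_\ell} \cdots \partial_{i_1}\bigl( \frak{C}_{w_\circ^{(n)}}(x,y) \bigr) \;=\; \frak{C}_w(x,y),
\]
which is the corollary. The main obstacle is the geometric step: verifying that $\partial_i = \pi^* \pi_*$ acts as $[\Omega_w] \mapsto [\Omega_{ws_i}]$ on these classes, and in particular that the scheme-theoretic definitions (via minors) are compatible with this push-pull operation. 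This comes down to (i) showing that the $\pi$-image of $\Omega_w$ is $\Omega_{ws_i}$ and that the generic fiber of $\pi|_{\Omega_w}$ is either a point or $\mathbb{P}^1$ according as $\ell(ws_i) > \ell(w)$ or $\ell(ws_i) < \ell(w)$, and (ii) invoking Cohen--Macaulayness of the universal locus on the flag bundle to conclude the class-level identity. These are precisely the ingredients developed in \cite{F2}, \cite{F3}, whose type-C counterpart is the substance of the argument.
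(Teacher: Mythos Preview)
Your proposal is correct and follows exactly the approach the paper indicates: the corollary is deduced from Theorem~\ref{t.theorem1} (the vexillary case, applied to $w_\circ^{(n)}$) by the BGG/Demazure mechanism of divided differences realized as $\pi^*\pi_*$ along the $\P^1$-fibrations of the flag bundle, as in \cite{BGG}, \cite{D}, \cite{F2}, \cite{F3}. One small slip: the line ``$[\Omega_w]=\pi^*[\Omega_{ws_i}]$'' should read $[\Omega_{ws_i}]=\pi^*\pi_*[\Omega_w]$ (equivalently, $\Omega_{ws_i}=\pi^{-1}\pi(\Omega_w)$ with $\pi|_{\Omega_w}$ birational onto its image), but the intended argument is clear and matches the paper.
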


A key idea from \cite{K} and \cite{IMN}, to achieve stable polynomials, independent of the ambient 
dimension, is to emphasize codimensions, and avoid the dimension of 
the ambient bundle; this often forces one to write down the right formulas. 
For example, the locus where $E_1$ meets $F_1$ can be given by 
$-x_1 - \dots -x_n - y_1 - \dots -y_n$, 
which does not stabilize as $n$ grows.  Writing this as $c_1(V - E_1 - F_1)$ is the 
stable answer.   

\smallskip

These double Schubert polynomials $\frak{C}_w(x,y)$ satisfy analogues of the main properties of 
those in type A.  In particular, they satisfy the five properties discussed by Fomin and Kirillov \cite{FK}.   
They are (0) homogeneous of degree $\ell(w)$; and (1) related by the corresponding difference 
operators, so represent the corresponding Schubert loci.  They (2) multiply with the same coefficients as 
in the cohomology rings; and (3) have nonnegative coefficients when expressed in terms of monomials.  
They are (4) stable,  
i.e.~independent of the $n$ 
of the Weyl group $W_n$ containing $w$.  

In addition, they are (5) double polynomials satisfying all these 
properties in the equivariant or degeneracy setting, with $\frak{C}_{w^{-1}}(x,y) = \frak{C}_w(y,x)$.  To this, we can add (6) they are given by 
Pfaffian formulas for vexillary signed permutations.  Note that (2) says that 
\begin{equation}
\frak{C}_u(x,y) \, \cdot \, \frak{C}_v(x,y) \, = \, \sum_w  \, c_{u \, v}^w   \,  \frak{C}_w(x,y), 
\end{equation}
for unique polynomials $c_{u \, v}^w$ in $\Z[y]$.  (By Graham's positivity result \cite{G}, these 
coefficients are positive in the variables $y_i - y_{i+1}$ and $-2y_1$, although we are far from 
knowing combinatorial formulas for them.)  The positivity (3) is a nontrivial 
combinatorial result that is proved in \cite{IMN}.  The analogous properties are satisfied 
in types B and D.

This contrasts with the fact proved in \cite{FK} that there are no polynomials satisfying 
properties (0), (1), (2) and (3), even for single polynomials of type $B_2$.  
In that source, as in most of the early work in this area before Billey and Haiman \cite{BH}, 
a polynomial was taken to be 
in $\Z[x]$, or in $\Z[x,y]$ for double polynomials.  Working in the ring $\Gamma[x,y]$, 
so regarding the $Q_{\lambda}$ as monomials, makes this success possible.

\smallskip

The situation in type B is quite similar to that in type C.  We denote the corresponding 
polynomials by $P_\lambda$, $\P_{\lambda}(t)$ and $P_{\bm{\tau}}(x,y)$; as one would 
expect, they come from the $Q_\lambda$, $\Q_{\lambda}(t)$ and $Q_{\bm{\tau}}(x,y)$ 
by division by powers of $2$.  

For 
type D, the corresponding polynomials are considerably more complicated, and we 
denote them by $R_\lambda$, $\R_{\lambda}(t)$ and $R_{\bm{\tau}}(x,y)$ to make the 
distinction clear. 
In addition, in type D, there are two polynomials for each strict partition, depending on 
whether one adds a $0$ to its end or not; so each strict partition occurs in type D with 
both an even and an odd length.  (This corresponds to the two families of maximal 
isotropic subspaces of a vector space with a quadratic form).  
There is one striking relation among them, 
however: when one expresses them appropriately in terms of the basic polynomials, 
the coefficients in all three types agree.  Namely, expand 
\[ 
\P_{\lambda}(t) = \sum p_{\lambda}^\mu(t) \, P_{\mu}, 
\,\,\,\,  \Q_{\lambda}(t) = \sum q_{\lambda}^\mu(t) \, Q_{\mu}, 
\,\,\,\,  \R_{\lambda}(t) = \sum r_{\lambda}^\mu(t) \, P_{\mu},
\]
with coefficients $p_{\lambda}^\mu(t)$, $q_{\lambda}^\mu(t)$, and $r_{\lambda}^\mu(t)$ in $\Z[t]$.  
Then $q_{\lambda}^\mu(t) = p_{\lambda}^\mu(t)$, which is no surprise, but it is also true 
that
\begin{equation}
r_{\lambda}^\mu(t) = p_{\lambda(+)}^{\mu(+)}(t),
\end{equation}
where $\lambda(+)$ is obtained from $\lambda$ by adding $1$ to each of its parts (including 
the last, which may be $0$).  The same relation is true for the corresponding polynomials for 
vexillary elements: writing $Q_{\bm{\tau}}(x,y) = \sum  q_{\bm{\tau}}^\mu(x,y) Q_{\mu}$, with 
coefficients in $\Z[x,y]$, and similarly for types B and D, we have $q_{\bm{\tau}}^\mu(x,y) = 
p_{\bm{\tau}}^\mu(x,y)$, and 
\begin{equation}
r_{\bm{\tau}}^\mu(x,y) = p_{\bm{\tau(+)}}^{\mu(+)}(x,y),
\end{equation}
where $\bm{\tau(+)}$ is obtained from $\bm{\tau}$ by adding $1$ to each $p_i$ and  
$q_i$ of the triple.  It is also the case that, as signed permutations, $w(\bm{\tau(+)}) = w(\bm\tau)$.  
Our proof of this uses specialization results of \cite{IMN}.  It would be 
interesting, and could simplify the type D story significantly, if one could find a direct 
reason for these identities.

\smallskip

We know three ways to prove Theorem 1, and its analogues in types 
B and D.  One 
can assume the results of \cite{IMN}, and then show how to get from 
one vexillary polynomial to another by difference operators -- a 
result of independent interest, which will be carried out in \cite{AF3}; 
this proves (2), and (1) follows.  The general formula can also be 
deduced from Kazarian's result for maximal isotropic bundles, by a 
simple diagonal trick: putting an appropriate skew form on $V \oplus V$, 
the locus where $E_p$ meets $F_q$ in a given dimension is 
the same as the locus where the diagonal $\Delta_V$ meets 
$E_p \oplus F_q$ in the same dimension.  (See Section 2 below.)
The third approach, which we follow here, proves
(1) directly, by finding loci that map birationally to 
the desired loci.  These loci can be defined by vanishing of sections of vector 
bundles; we then apply Kazarian's elegant Gysin formulas to push 
these classes forward.  This latter 
has the advantage that (2) follows, and one recovers the main results of \cite{IMN} 
(and \cite{K}) directly. 

\smallskip
We begin by reviewing type A, in a simpler language than in \cite{F2} which provides a model 
for the other types.  There are 
differences, however, between type A and the others.  In type A the 
flags $E_\bullet$ and $F_\bullet$ play dual roles, one corresponding to 
codimension and the other to dimension, while, in types B, C, and D the 
two flags play interchangeable roles.  (This explains why we have kept 
the sign difference in type A, which is in the original double Schubert 
polynomial notation, but have eliminated it in the other types.)  In types 
B, C, and D there are maximal isotropic subbundles, which is the geometry 
behind the rings $\Gamma$ and $\Gamma'$, while type A has only the 
ring $\Z$. 

Then we 
turn to type C, where we complete the proof of the theorem, and give 
applications to symmetric maps of vector bundles.  Types B and D 
are carried out in the next two sections.  The polynomials in type B 
differ from those in type C by division of some powers of $2$, although there 
is some change in the geometry.  In type D the formulas must be modified more 
significantly, and 
again Kazarian \cite{K} indicates how this modification should be carried out.  As 
the proof in type D was not included in \cite{K}, we include a proof here; this involves 
some algebra, which is put in an appendix.

\smallskip
We have written this paper so that it can be read for those interested only in complex 
varieties, with values in cohomology rings, or for varieties over arbitrary fields, with 
values in Chow rings.  To include characteristic $2$, one needs to make the usual 
modification in types B and D, replacing a symmetric bilinear form $\langle u, v \rangle$ 
by a quadratic norm $N(v)$.  
The interested reader can find a detailed study of this modification in \cite{EKM}, and we 
will not mention it again. 
\smallskip

In \cite{AF2} we show that the vexillary elements can be characterized by 
pattern avoiding, as Lascoux and Sch{\"u}tzenberger did in type A.   
We also show that there is a natural 
notion of essential sets for signed permutations, corresponding to 
which of the conditions defining the locus are necessary; although not obvious from 
this geometric description, this notion turns out to be precisely the notion 
defined by Reiner, Woo and Yong \cite{RWY} for general Coxeter groups.  
In \cite{AF3} we 
give another characterization of vexillary permutations in terms of 
labeled (shifted) Young diagrams, which makes it clear how to go from one 
vexillary class to another by a sequence of difference operators.  In \cite{AF4} 
we show how to extend this story to allow skew symmetric and symmetric 
forms on a vector bundle with values in a line bundle.

\smallskip

All of this work is inspired by \cite{K} and \cite{IMN}, and we are deeply 
indebted to their authors.

\section{Type A revisited; Bundles without Symmetries}

In this section we review the results in type A, from \cite{F2}, but 
modified -- and simplified -- to generalize to the other types.  We are given a 
vector bundle  $V$ of some rank $n$, on a variety $X$,
assumed nonsingular for simplicity, together with two flags of subbundles: 
\[
 V = E_0 \supset E_1 \supset E_2 \supset \ldots  \supset E_a
\]
and  
\[
 0 = F_0 \subset F_1 \subset F_2 \subset \ldots \subset F_b ,
\] 
with the corank of $E_i$ being $i$, and the rank of $F_j$ being $j$. 
For each permutation $w$ in $S_n$, we have the locus $\Omega_w$
defined by the conditions that 
\[
 \dim \, (  E_p  \cap  F_q  )  \,\,  \geq \,\,
 | \, \{ i > p \mid  w(i) \leq q \} \, |    
\]
for all $p$ and $q$.  This dimension condition 
describes the set of points $x$ in $X$ where the fibers
$E_p(x)$ and $F_q(x)$ meet in at least the given dimension, 
a locus locally defined by determinants.  
For the flags in general position (as in the introduction), this locus has 
codimension equal to the length $\ell(w)$  of  $w$, i.e., 
the number of pairs $i<j$ such that $w(i)>w(j)$; equivalently, in notation 
valid for all types,  $\ell(w)$ is the minimum number $\ell$ for which one 
can write $w = s_{i_1} \cdot \ldots \cdot s_{i_{\ell}}$. Note that 
the definition is independent of $n$, and makes sense whenever 
$w(i) < w(i+1)$ for  $i > a$  and  $w^{-1}(j) < w^{-1}(j+1)$ for $j > b$.
For  $i$ and $j$ positive, set
\[
  x_i = c_1(E_{i-1}/E_i)  \,\,\,\, \text{and} \,\,\,\,  y_j = c_1(F_j/F_{j-1}).
\]

Then the class $[\Omega_w]$ of $\Omega_w$ in the cohomology group
$H^{2\ell(w)}(X)$, in the complex setting, or the Chow group 
$A^{\ell(w)}(X)$ in general,  is given by the double Schubert polynomial 
$\frak{S}_w(x,y)$ of Lascoux and Sch\"utzenberger: 
\[
  [\Omega_w]  =  \frak{S}_w(x,y). 
\]
To describe these polynomials, we use the one-line notation, writing 
$w = w(1) \,\, w(2) \, \ldots \, w(m)$  to mean that these are the first 
$m$ values of $w$, and that  $w(i) = i$  for $ i > m$.  Let  $s_i$ be 
the transposition that interchanges $i$ and $i+1$, and let 
$\partial_i$ be the corresponding difference operator with respect to the $x$ variables, 
i.e., 
\[
  \partial_i(f) = (f - s_i(f))/(x_i - x_{i+1});
\]
here  $s_i(f)$ interchanges  $x_i$ and  $x_{i+1}$ in the polynomial $f$, treating $y$ variables as scalars.  
These Schubert polynomials are determined (and can be calculated) 
by the properties that, for  $w = n \, n\!-\!1 \, \ldots \, 2 \,\, 1$, the longest permutation in $S_n$, 
$\frak{S}_w(x,y) = \prod_{i+j \leq n} (x_i - y_j)$.  And, if $w(i) < w(i+1)$,
then  $\frak{S}_w(x,y) = \partial_i(\frak{S}_{w'}(x,y))$, 
where $w' = w\cdot s_i$ is the result of switching the entries in positions 
$i$ and $i+1$.  

There is a natural way to describe degeneracy loci, by specifying 
a decreasing sequence $E_{p_i}$ of bundles in the $E_\bullet$ flag, and an 
increasing sequence of bundles $F_{q_i}$ in the $F_\bullet$ flag, and 
requiring that the corresponding bundles $E_{p_i}$ and $F_{q_i}$ meet 
in at least a given dimension $k_i$.  This idea leads directly to the notion 
of a vexillary permutation. 
Define a {\bf triple of type A}  to be
a triple $\bm{\tau} = (\bf{k}, \bf{p}, \bf{q})$ 
of sequences of positive integers of the same length  $s$, 
\[
  {\bf{k}} \, = \,  k_1 < \dots < k_s, \,\,\,\,\,\, {\bf{p}} \, = \,  p_1 \geq \dots \geq p_s, \,\,\,\,\,\,
{\bf{q}} \, = \, q_1 \leq \dots \leq  q_s,
\]
satisfying the conditions that $k_i \leq q_i$  for all $1 \leq i \leq s$, 
and 
\[
 \tag{*} \,\,\,\,\, (p_i - p_{i+1}) + (q_{i+1} - q_i) \, > \, k_{i+1} - k_i   
\]
for $1 \leq i \leq s-1$, and $p_s - q_s + k_s > 0$.  Setting
$l_i = p_i - q_i + k_i$, these conditions are equivalent to requiring 
$l_i \leq p_i$ for all $i$ and $l_1 > \ldots > l_s > 0$.
The corresponding locus $\Omega_{\bm{\tau}}$ is the locus 
where 
\[
 \dim \, ( \,E_{p_i} \cap F_{q_i} \, ) \,\, \geq \,\, k_i ,
\]
for all $i$ between  $1$ and $s$.   Setting $r_i = q_i - k_i = p_i - l_i$, this is the locus 
where the rank of the map from $F_{q_i}$ to $V/E_{p_i}$ is at most $r_i$, $1 \leq i \leq s$.

This is the same as the locus $\Omega_w$, where  $w = w(\bm{\tau})$ 
is the permutation of minimal length such that the cardinality of 
$\{ j > p_i \mid w(j) \leq q_i \} $ is $k_i$ for all $i$.  To construct this 
permutation, start by putting the  $k_1$ integers $q_1 - k_1 + 1, \ldots q_1$, in 
this order, starting in the first position to the right of $p_1$; then insert $k_2 - k_1$ 
different integers, as large as possible but at most equal to $q_2$, in increasing order, 
as far left as possible but to the right of position $p_2$; continue in this way until $k_s$ 
integers have been entered; then insert the other integers in the other positions, in 
increasing order.  For example, for the triple
$\bm{\tau} = (\, 2 \; 6 \; 8 \, , \, 7 \; 4 \; 2 \, , \, 5 \; 7 \; 9 \,)$, 
we construct its permutation as follows: put $4$ and $5$ in positions $8$ and $9$; 
then put $2$, $3$, $6$, and $7$ in positions $5$, $6$, $7$ and $10$; then put 
$8$ and $9$ in positions $3$ and $4$; and fill in the integers $1$ and $10$ in the first 
two positions, 
getting $w({\bm{\tau}}) = 1 \; 10 \; 8 \; 9 \; 2 \; 3 \; 6 \; 4 \; 5 \; 7$.
The permutations 
arising in this way are vexillary (they avoid the pattern $2 \; 1 \; 4 \; 3$), 
and every vexillary permutation arises from a unique such triple. 

The determinantal formula for this locus (or double Schubert polynomial) is 
particularly easy to describe in this language. 
A triple determines a partition $\lambda = \lambda_1 \geq \dots \geq \lambda_r$ of 
length $r = k_s$.  This is defined by setting $\lambda_{k_i} = p_i - q_i + k_i$, and, for 
general $k$, set 
$\lambda_k = \lambda_{k_i}$ with $k_i$ the minimal integer in the sequence $\bf{k}$ 
that is at least as large as $k$.  

Define 
\[
 a(k_i) \, =\, \prod_{j = 1}^{p_i} (1 + x_j) / \prod_{j = 1}^{q_i} (1 + y_j) ,
\]
and, for $k \leq r$ not equal to any $k_i$,  set $a(k) = a(k_i)$,  where  $k_i$  is the 
smallest in the sequence that is larger than $k$.  Then the formula for this 
locus, which is  $\frak{S}_{w(\bm{\tau})}(x,y)$, is the multi-Schur determinant 
\begin{equation}\label{e.schur}
 s_\lambda(a(1), \dots , a(r)),
\end{equation}
which is defined to be the determinant of the $r$ by $r$ matrix whose $(i,j)$ entry is 
$ a(i)_{\lambda_i + j - i} $.

There is a natural duality in this story.  In the general setup, let $V' = V^*$ be the dual vector bundle, 
and let  $E'_i = \Ker(V^* \to F_i^*)$,  $F'_j = \Ker(V^* \to E_j^*)$.  Then the locus $\Omega_w$ for 
the original setup is equal to the locus $\Omega_{w^{-1}}$ for this setup.  This reflects (and reproves) 
the fact that  $\frak{S}_{w^{-1}}(-y,-x) =  \frak{S}_w(x,y)$.
Define the dual of $\bm{\tau}$ to be  the triple 
\[
 {\bm{\tau}}^* \, = \, (l_s < \dots < l_1, \,\, q_s \geq \dots \geq q_1, \,\,  p_s \leq \dots \leq p_1). 
\]
Then  $w(\bm{\tau}^*) = w(\bm{\tau})^{-1}$.  The partition for ${\bm{\tau}}^*$ is the conjugate 
(transpose) $\lambda'$ of the partition for $\bm{\tau}$.  
The corresponding Schur polynomial, $s_{\lambda'}(a'(1), \dots, a'(r'))$
with $r' = l_1$, using the rows with 
\[
 a'(l_i) \, =\, \prod_{j = 1}^{q_{s+1-i}} (1 - y_j) / \prod_{j = 1}^{p_{s+1-i}} (1 - x_j) ,
\]
and $a'(l) = a'(l_i)$ for $l_{i-1} < l < l_i$.  This is equal to the original Schur determinant, 
a basic identity among Schur determinants (which also follows from this geometry).

There is a natural notion of {\bf redundant triple} in type A, which allows the inequalities between 
the $l$'s to be weak: $l_1 \geq \dots \geq l_s$.  These give 
the same vexillary permutations, and the same determinants, although the entries of the 
matrices change.  (The reason is that one matrix can be changed to the other by a sequence of 
elementary row operations.)  One can reduce a redundant triple to an ordinary triple by leaving out 
any $(k_i,p_i,q_i)$ for which $l_i = l_{i+1}$.  For example, the triple 
\[
 (1\;2\;3\;4\;5\;6\;7\;8,\,\, 7\;7\;6\;6\;5\;4\;3\;2\,, \,\, 4\;5\;6\;7\;7\;7\;9\;9)
\] 
reduces to the above triple 
$( 2 \; 6 \; 8, \,\, 7 \; 4 \; 2, \,\, 5 \; 7 \; 9 )$.
The added conditions on dimensions of intersections follow from those of 
the reduced triple.  
Such redundant conditions appear often in the literature, including the Kempf-Laksov formula.\footnote{This completes and corrects Remark 9.16 of \cite{F2}; note that the notion of redundancy is not preserved by the duality that interchanges $\bf{k}$ and $\bf{l}$.}  
In addition, it can be useful to 
start with an ordinary triple, and expand it to a redundant triple, as in this example, with $k_i = i$; in this case $\lambda = \bm{l}$.  Note that this can often be done in more than one way.

Here is a proof of the formula \eqref{e.schur} for the locus $\Omega_\triple$.
We may replace $X$ by the bundle of flags in $V$, as described in the introduction, and assume the bundles are in general position.  
Consider first the case where $q_i = k_i = i$ for $1 \leq i \leq s$.  The locus is defined first by 
requiring  $F_1$ to be contained in $E_{p_1}$, which is defined by the vanishing of 
a section of $\Hom(F_1,V/E_{p_1})$, so is given by the top Chern class 
$c_{p_1}( F_1^* \otimes V/E_{p_1} )$. 
On this locus one has the locus where the map from $F_2/F_1$ to $V/E_{p_2}$ vanishes, given 
by  $c_{p_2}((F_2/F_1)^* \otimes V/E_{p_2})$.  Continuing in this way, one sees that the locus 
is given by the formula
\[
 \prod_{i=1}^s c_{p_i}((F_i/F_{i-1})^* \otimes V/E_{p_i}).
\]
It is an elementary calculation (cf.~\cite{F1}, App.~A.9 or \cite{Mac} (3.6)),
using row operations on the matrices, to see that this is equal to the Schur polynomial
\[
 s_{p_1 \ldots p_s} (c(V - E_{p_1} - F_{1}), \ldots , c(V - E_{p_s} - F_{s})),
\]
and this is the required formula in this case. 

Next consider the case where $k_i = i$ for all $i$, but with $q_i$ arbitrary.  We construct the flag bundle $Y$ over $X$ of flags $D_1 \subset D_2 \subset \dots \subset D_s$, with $D_i$ of rank $i$ and contained in $F_{q_i}$.  
The locus $\Omega_{\bm{\tau}}$ on $X$ is the birational image of the locus on 
this flag variety given by the conditions that $D_i$ is contained in $E_{p_i}$.  By the case 
just considered, this is given by the polynomial $s_{p_1 \ldots p_s} (c(1), \ldots , c(s))$, with 
$c(i) = c(V - E_{p_i} - D_i)$.  To conclude in this case, we need a Gysin formula to push this 
forward by the projection $\pi \colon Y \to X$.  This formula, due  to \cite{JLP}, 
and given in this form in \cite{F1}, Cor.~14.2, is
\begin{equation*}
\begin{split}
 \pi_* ( s_{p_1 \ldots p_s} (c(V - E_{p_1} - D_1), \ldots , c(V - E_{p_s} - D_s)) )  = \\
s_{\lambda_1 \ldots \lambda_s} (c(V - E_{p_1} - F_{q_1}), \ldots , c(V - E_{p_s} - F_{q_s})),
\end{split}
\end{equation*}
where $\lambda_i = p_i - q_i + i$.  This is the required formula in this case. 
The general case is reduced to the preceding case by inserting integers between successive 
$k_i$ and $k_{i+1}$, and corresponding integers between $p_i$ and $p_{i+1}$ and between 
$q_i$ and $q_{i+1}$, to get a redundant triple with $k_i = i$ for all $i$.   

Standard arguments, by passing to a flag bundle over $X$ (see \cite{F2}), show that the formula is valid if 
one only has the bundles $F_{q_1} \subset F_{q_2} \subset  \dots$ and $E_{p_1} \supset 
E_{p_2} \supset \dots$ of  $V$.

Note that the proof of this formula includes a proof that the double Schubert polynomials of 
vexillary permutations are given by the corresponding determinants -- a result proved 
in \cite{Mac} with a fair amount  of algebra.  It also implies that construction of general double Schubert polynomials is independent of the choice of $n$ used in their definition.

\section{Type C, Symplectic Bundles}

We begin with two algebraic lemmas, which can be found in a geometric context in \cite{K}.
The fact that the matrices whose Pfaffians are the $\Q_\lambda$ is skew-symmetric is 
a special case of the following lemma.

\begin{lemma}\label{l.skewQkl}
Let $\lambda_1,  \dots   \lambda_r$ be positive integers.  For 
$1 \leq k \leq r$ let $S(k)$ be a set of positive integers of cardinality strictly 
less than $\lambda_k$.  Set $q(k) = Q \cdot \prod_{j \in S(k)} (1 + t_j)$.  Then the 
matrix whose $(k,l)$ entry is 
\[ q(k)_{\lambda_k} \cdot q(l)_{\lambda_l}  + 2 \sum_{j=1}^{\lambda_l} (-1)^j 
q(k)_{\lambda_k + j} \cdot q(l)_{\lambda_l - j}
\]
is skew symmetric. 
\end{lemma}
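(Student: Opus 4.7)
The plan is to verify that the sum of the $(k,l)$ and $(l,k)$ entries is zero, by recognizing this sum as a coefficient of a power series that is forced to be low-degree by the defining relations of $\Gamma$. Write $A = q(k)$, $B = q(l)$, $a = \lambda_k$, $b = \lambda_l$. After doubling and dividing by $2$, the assertion $M_{kl} + M_{lk} = 0$ becomes
\[
A_a B_b \,+\, \sum_{j=1}^{b} (-1)^j A_{a+j} B_{b-j} \,+\, \sum_{j=1}^{a} (-1)^j A_{a-j} B_{b+j} \,=\, 0.
\]
The first step is to reindex: substituting $m = a+j$ in the first sum and $m = a-j$ in the second consolidates everything into $\sum_{m=0}^{a+b} (-1)^{m-a} A_m B_{a+b-m}$, a single sum over all ways to split the total degree.

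Next I introduce a formal variable $z$ and think of $Q(z) = 1 + Q_1 z + Q_2 z^2 + \cdots$ so that $q(k)(z) = Q(z)\prod_{j\in S(k)}(1+t_j z)$, with $q(k)_p$ the coefficient of $z^p$. Under this convention, the consolidated sum is exactly $(-1)^a$ times the coefficient of $z^{a+b}$ in $A(-z)\,B(z)$. The payoff is that
\[
A(-z)\,B(z) \,=\, Q(-z)\,Q(z) \cdot \prod_{j\in S(k)}(1-t_j z)\prod_{j\in S(l)}(1+t_j z).
\]
The defining relations of $\Gamma$ are precisely $Q \cdot Q^* = 1$, which in this formal-variable packaging is the single identity $Q(z)\,Q(-z) = 1$. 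Therefore $A(-z)\,B(z)$ reduces to the polynomial $\prod_{j\in S(k)}(1-t_j z)\prod_{j\in S(l)}(1+t_j z)$, whose degree in $z$ is $|S(k)|+|S(l)|$.

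Finally, the hypothesis $|S(k)| < \lambda_k$ and $|S(l)| < \lambda_l$ gives $|S(k)|+|S(l)| < a+b$, so the coefficient of $z^{a+b}$ vanishes, proving skew-symmetry. The only non-obvious step is the reindexing that collapses the two separate sums plus the diagonal term into a single clean series coefficient; once that is in place, the identity $Q\cdot Q^* = 1$ does all the real work, and the degree bound on the $S(k)$ closes the argument. There is no serious obstacle — this is essentially a bookkeeping computation organized around the generating-function form of the relations defining $\Gamma$.
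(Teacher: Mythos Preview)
Your proof is correct and follows essentially the same approach as the paper's. The paper observes that $q(k)\cdot q(l)^* = \prod_{j\in S(k)}(1+t_j)\cdot\prod_{j\in S(l)}(1-t_j)$ (using $Q\cdot Q^*=1$), notes this has degree less than $\lambda_k+\lambda_l$, and asserts that the vanishing of the degree-$(\lambda_k+\lambda_l)$ term is equivalent to the lemma; you simply spell out the reindexing that makes this equivalence explicit.
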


\begin{proof} Since $Q \cdot Q^* = 1$, it follows that $q(k) \cdot q(l)^* = 
\prod_{j \in S(k)} (1 + t_j) \cdot \prod_{j \in S(l)} (1 - t_j)$.  This is a 
polynomial of degree strictly less than $\lambda_k + \lambda_l$.  
The assertion that its term of degree $\lambda_k + \lambda_l$ vanishes 
is equivalent to the assertion of the lemma.
\end{proof}

We next show that the polynomial ${\bf{Q}}_{\bm{\tau}}(x,y)$ defined in the introduction from a redundant triple is the same as that of the actual triple obtained from it by omitting the redundant terms.

\begin{lemma}  Let $\bm{\tau} = (\bf{k}, \bf{p}, \bf{q})$  be a redundant triple, and 
let $\bm{\tau'}$ be the (possibly redundant) triple obtained from it by omitting a term $k_i, p_i, q_i$ for 
which $p_i - p_{i+1} + q_i - q_{i+1} = k_{i+1} - k_i$.  Then ${\bf{Q}}_{\bm{\tau}}(x,y) = {\bf{Q}}_{\bm{\tau'}}(x,y)$.
\end{lemma}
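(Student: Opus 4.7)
The plan is to show that the two skew-symmetric matrices $M$ and $M'$ whose Pfaffians give ${\bf{Q}}_{\bm{\tau}}(x,y)$ and ${\bf{Q}}_{\bm{\tau'}}(x,y)$ are related by a unimodular conjugation $M = E\, M'\, E^T$, where $E$ is upper-unitriangular, so that $\Pf(M) = \det(E)\cdot \Pf(M') = \Pf(M')$.

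First I would verify that $\lambda(\bm{\tau}) = \lambda(\bm{\tau'})$: the redundancy equality $(p_i - p_{i+1}) + (q_i - q_{i+1}) = k_{i+1} - k_i$ yields $\lambda^{\bm{\tau'}}_{k_i} = \lambda_{k_{i+1}} + (k_{i+1} - k_i) = p_i + q_i - 1 = \lambda^{\bm{\tau}}_{k_i}$, and agreement elsewhere is immediate. A short calculation then shows that on the enlarged block $(k_{i-1}, k_{i+1}]$ the partition is a \emph{staircase}, i.e.\ $\lambda_k - \lambda_{k+1} = 1$ throughout: inside each original sub-block this holds by definition, and across the removed index $k_i$ the general boundary gap $(p_j - p_{j+1}) + (q_j - q_{j+1}) - (k_{j+1} - k_j) + 1$ equals exactly $1$ under the redundancy. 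Set $c := c(k_{i+1}) = Q\cdot\prod_{j=1}^{p_{i+1}-1}(1+x_j)\cdot\prod_{j=1}^{q_{i+1}-1}(1+y_j)$ and $P := \prod_{j=p_{i+1}}^{p_i-1}(1+x_j)\cdot\prod_{j=q_{i+1}}^{q_i-1}(1+y_j)$, a polynomial of degree $d = k_{i+1} - k_i$. Then the two sequences of power series differ only on $I := (k_{i-1}, k_i]$, where $c(k)^{\bm{\tau}} = P\cdot c$ while $c(k)^{\bm{\tau'}} = c$; outside $I$ they coincide.

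Now define $E$ by $E_{k, k+b} := P_b$ (the degree-$b$ component of $P$) for $k \in I$ and $0 \leq b \leq d$, $E_{k,k} := 1$ for $k \notin I$, and all other entries $0$. Since $P_0 = 1$, the matrix $E$ is upper-unitriangular with $\det E = 1$. The core step is the entry-by-entry verification of $M = E\, M'\, E^T$. The key algebraic input is the staircase identity: for $k \in I$ and $0 \leq b \leq d$ one has $k+b \leq k_{i+1}$, so $\lambda_{k+b} = \lambda_k - b$ and $c(k+b)^{\bm{\tau'}} = c$, whence
\[
(Pc)_{\lambda_k + j} \;=\; \sum_{b=0}^{d} P_b\, c_{\lambda_k + j - b} \;=\; \sum_{b=0}^{d} P_b\, c(k+b)^{\bm{\tau'}}_{\lambda_{k+b} + j}.
\]
Substituting this (and its column analogue, obtained by applying the same identity to $c(l)^{\bm{\tau}} = Pc$ in the slot $c(l)_{\lambda_l - j}$) into the defining formula for $M_{k,l}$ expresses each entry as $\sum_{a,b} P_a P_b\, M'_{k+a, l+b}$, with $a$ (resp.\ $b$) running over $\{0,\ldots,d\}$ when $k$ (resp.\ $l$) lies in $I$ and collapsing to $a=0$ (resp.\ $b=0$) otherwise; this is exactly $(E M' E^T)_{k,l}$. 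The main obstacle is the four-case book-keeping (both $k, l \in I$; exactly one; neither), but all shifts $k+a$ and $l+b$ remain inside $(k_{i-1}, k_{i+1}]$ where the staircase and common-$c$ hypotheses continue to apply, so the identity goes through and the lemma follows.
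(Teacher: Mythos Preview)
Your argument is correct and is essentially the same idea as the paper's proof, packaged differently. Both rest on the invariance of the Pfaffian under the row--column operations induced by replacing $c(k)$ by $P\cdot c(k)$ on a staircase block where the neighboring $c$'s coincide. The paper phrases this via multilinearity of $\Pf_{\lambda_1\ldots\lambda_r}(c(1),\ldots,c(r))$ in each slot together with the alternating property in the indices $\lambda_i$ (so the extra terms produced by expanding $Pc$ all land on repeated indices and vanish), applied one slot at a time as in Kazarian's proof of his Theorem~1.1. You instead perform all the slot changes simultaneously by exhibiting the explicit upper-unitriangular matrix $E$ with $M = E\,M'\,E^{T}$ and invoking $\Pf(E M' E^{T}) = \det(E)\,\Pf(M')$. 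Your version is more concrete and avoids having to extend the Pfaffian to arbitrary integer sequences of $\lambda$'s; the paper's version is terser and highlights the structural reason (multilinear and alternating) behind the identity.
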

\begin{proof} 
For this it is useful, as in \cite{K}, to note that Pfaffians 
\[
\Pf_{\lambda_1 \ldots \lambda_r}(c(1), \ldots, c(r))
\] 
can be defined by the same formulas for an arbitrary sequence $\lambda_1, \ldots , \lambda_r$ of integers; this Pfaffian is skew-symmetric in these indices, and vanishes when two are equal.   The lemma follows from the multi-linearity of the Pfaffians, together with the fact that 
they vanish whenever two indices are equal.  In fact, one verifies that if $c(k)  = c(k+1) = \dots = c(k+m)$, and $\lambda_i = \lambda_{i+1} + 1$ for $k \leq i < k+m$, then the Pfaffian $\Pf_{\lambda_1 \ldots \lambda_r}(c(1), \ldots, c(r))$ does not change if $c(k)$ is multiplied by a polynomial $\prod_{j=1}^m(1+z_j)$ (cf.~\cite{K}, proof of Thm. 1.1).
\end{proof}

We describe in more detail how a triple $\bm{\tau} = (\bf{k}, \bf{p}, \bf{q})$ 
determines a signed permutation $w(\bm{\tau})$, written in one-line notation.  
We want $w$ minimal with $\#\{a \geq p_i \mid w(a) = \bar{b}, b \geq q_i\} = k_i$.
Start by inserting a sequence of $k_1$ consecutive 
barred integers, $\bar{q_1+k_1 - 1}$ to $\bar{q_1}$, in positions  $p_1$ to $p_1-k_1+1$. 
Then insert $k_2 - k_1$ barred elements, bars of the smallest remaining integers that are 
at least $q_2$, placed in order (bars of decreasing sequences, or 
increasing if one regards barred 
integers as negative) at or to the right of position $p_2$ but as close 
as possible to that position.  
Continue in this way until one has entered $k_s$ barred integers.  
Conclude by inserting the remaining positive integers (that haven't been entered in barred form), in increasing order, in the remaining spots.  
For example, consider the triple $\bm{\tau}$ with ${\bf{k}} = 2 \; 3 \; 5 \; 8$, ${\bf{p}} = 8 \; 6 \; 6 \; 2$, and ${\bf{q}} = 6 \; 5 \; 2 \; 2$. One puts  $\bar{7}$ and $\bar{6}$ in positions $8$ and $9$, then $\bar{5}$ in position $6$, then $\bar{3}$ and $\bar{2}$ in positions $7$ and $10$, then $\bar{9}$, $\bar{8}$, and $\bar{4}$ in positions $2$, $3$, and $4$; finally put $1$ and $10$ in the remaining positions $1$ and $5$, arriving at $w(\bm{\tau}) = 1 \; \bar{9} \; \bar{8} \; \bar{4} \; 10 \; \bar{5} \; \bar{3} \; \bar{7} \; \bar{6} \; \bar{2}$. 

One can reconstruct the triple from the vexillary signed permutation.  To start, $p_1$ is the position after the last descent; starting at that position one finds a sequence of $k_1$ consecutive barred integers, ending in $\bar{q_1}$. 
One repeats this on the remaining, but regarding elements as being in sequence if the only integers missing have already been taken account of.  The combinatorics of this correspondence will be worked out in detail in \cite{AF2}.  

We turn next to the direct proof of Theorem 1.  The proof follows the lines 
in type A, but using a Gysin formula of Kazarian's.  
We use the same flag bundle, with tautological flags $D_1 \subset 
\dots \subset D_s$, with $D_i$ of rank $i$ and $D_i \subset F_{q_i}$. 
We have a sequence of projective bundles $X_s \to   \dots \to X_1 \to X$, 
where $X_k$ is the corresponding flag bundle with $s$ replaced by $k$.  Using 
the standard convention of not changing notation for pullback bundles, the 
projection $\pi_k \colon X_k \to X_{k-1}$ is the projective bundle 
$\P(F_{q_k}/D_{k-1})$, with its tautological line bundle $D_k/D_{k-1}$.
On $X_k$ we have the locus $Z_k$ where $D_k$ is contained in $E_{p_k}$.  
The locus $Z_s$ maps birationally onto the locus $\Omega_{\bm{\tau}}$, so our goal is to compute the push-forward of its class $[Z_s]$.  This is done in stages.  We let $\phi_k \colon Z_k \to Z_{k-1}$ be the map induced by $\pi_k$, and set $\phi \colon Z_s \to X$ to be their composite.
 
On $X_1$, $Z_1$ is given by the vanishing of a section of $\Hom(D_1,V/E_{p_1})$; so $[Z_1] = 
c_{n+p_1-1}(D_1^* \otimes V/E_{p_1})$.  
On the restriction $X_2 |_{Z_1}$, we have $D_1 \subset E_{p_1} \subset E_{p_2} \subset D_1^\perp$.
The locus $Z_2$ is given by the vanishing of a section of $\Hom(D_2/D_1,D_1^\perp/E_{p_2})$; so $[Z_2] = c_{n+p_2-2}((D_2/D_1)^* \otimes D_1^\perp/E_{p_2})$.  Continuing, on $X_k |_{Z_{k-1}}$ we have $D_{k-1} \subset E_{p_{k-1}} \subset E_{p_k} \subset D_{k-1}^\perp$, and the locus $Z_k$ where $D_k$ is contained in $E_{p_k}$ is given by the formula $[Z_k] = 
c_{n+p_k-k}((D_k/D_{k-1})^* \otimes D_{k-1}^\perp/E_{p_k})$.  (See [F3] and [FP] for a 
discussion of why these equations cut out these loci with their reduced strictures.)

We need a Gysin formula to push forward these Chern classes.  Kazarian has proved 
such a formula in \cite{K}, in the special case where all $E_{p_i}$ are equal, but exactly the 
same argument works in our more general setting.  Let $h_i = - c_i(D_i/D_{i-1})$, and set
\[
c(k) = c(V - E_{p_k} - F_{q_k}) = c(V - E - F) \cdot c(E/E_{p_k})\cdot c(F/F_{q_k})
\]
the assertion is that, for any nonnegative integers 
$m_1, \dots, m_s$, 
\begin{equation}
\phi_*(h_1^{m_1} \cdot \dots\cdot  h_s^{m_s}) = \Pf_{\lambda_1+m_1, \dots \lambda_s + m_s}(c(1), \dots c(s)). 
\end{equation}
Taking all $m_i = 0$ gives the asserted formula of the theorem.  This assertion is based on 
the following lemma.  Fix $k$ between $1$ and $s$, and set 
\[
H^{(k)} = c(D_{k-1} - D_{k-1}^*) = \prod_{j=1}^{k-1} \frac{1-h_i}{1+h_i} = \prod_{j=1}^{k-1} (1 - 2h_j + 2h_j^2 - 2h_j^3 + \dots ).
\]

\begin{lemma}\label{l.typeCstep}
For any nonnegative integer $m$, 
\[
(\phi_k)_*(h_k^m) = \sum_{j=0}^{\lambda_k+m} H^{(k)}_j c(k)_{\lambda_k+m - j}.
\]
\end{lemma}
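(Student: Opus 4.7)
The plan is a direct push-forward computation on the projective bundle $\pi_k \colon X_k = \P(F_{q_k}/D_{k-1}) \to X_{k-1}$. By the projection formula, $(\phi_k)_*(h_k^m) = (\pi_k)_*(h_k^m \cdot [Z_k])$, where $[Z_k] = c_r(L^* \otimes N)$ with $L = D_k/D_{k-1}$, $N = D_{k-1}^\perp/E_{p_k}$, and $r = \rank N = n + p_k - k$. Since $L$ is a line bundle and $h_k = c_1(L^*)$, I would first expand this top Chern class as $c_r(L^* \otimes N) = \sum_{j=0}^r c_j(N) \, h_k^{r-j}$.

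Next I would apply the standard Gysin formula for $\pi_k$. Since $\rank F_{q_k} = n+1-q_k$, the relative dimension of $\pi_k$ is $n+1-q_k-k$, so $(\pi_k)_*(h_k^i) = s_{i-n-1+q_k+k}(F_{q_k}/D_{k-1})$. Combining with the expansion above, the resulting exponent $r + m - j - (n+1-q_k-k)$ simplifies by arithmetic to $\lambda_k + m - j$ with $\lambda_k = p_k+q_k-1$, giving
\[
(\phi_k)_*(h_k^m) = \sum_j c_j(N) \cdot s_{\lambda_k + m - j}(F_{q_k}/D_{k-1}).
\]

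The last step is to decompose $c(N)$. Since $D_{k-1} \subset F_{q_{k-1}}$ is isotropic on $Z_{k-1}$, the symplectic identity $V/D_{k-1}^\perp \cong D_{k-1}^*$ applies, yielding $c(D_{k-1}^\perp) = c(V)/c(D_{k-1}^*)$. Comparing with the definitions of $c(k) = c(V)/(c(E_{p_k})c(F_{q_k}))$ and $H^{(k)} = c(D_{k-1})/c(D_{k-1}^*)$, one obtains $c(N) = c(k) \cdot c(F_{q_k}/D_{k-1}) \cdot H^{(k)}$. Substituting this into the previous display and contracting the $c(F_{q_k}/D_{k-1})$ factor against the Segre classes via $\sum_b c_b(A) s_{f-b}(A) = \delta_{f,0}$ (for $A = F_{q_k}/D_{k-1}$) collapses the intermediate sums to the constraint $a+e = \lambda_k + m$, leaving exactly $\sum_e H^{(k)}_e \, c(k)_{\lambda_k + m - e}$.

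The main obstacle is bookkeeping: one must track the rank conventions carefully so that the exponent arithmetic produces $\lambda_k + m$ on the nose, and verify that the symplectic identity makes the $c(V)$ factors cancel cleanly between $c(N)$ and $c(k)$. Once these alignments are in place, the rest is a mechanical Chern-Segre manipulation.
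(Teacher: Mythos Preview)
Your argument is correct and follows essentially the same route as the paper: both compute $(\pi_k)_*(h_k^m \cdot [Z_k])$ via the projective-bundle Gysin formula after rewriting $D_{k-1}^\perp/E_{p_k}$ using the symplectic duality $V/D_{k-1}^\perp \cong D_{k-1}^*$. The only cosmetic difference is that the paper packages $h_k^m\,[Z_k]$ as the single virtual Chern class $c_{n+p_k-k+m}\bigl(D_{k-1}-D_{k-1}^* + V - E_{p_k} - F_{q_k} + F_{q_k}/D_k\bigr)$ and then applies the Gysin identity $\pi_* c_m(A+Q) = c_{m-\rank Q}(A)$ for the tautological quotient $Q = F_{q_k}/D_k$, whereas you expand in Segre classes of $F_{q_k}/D_{k-1}$ and contract via $c\cdot s = 1$; these two maneuvers are equivalent.
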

A key to this is to write, on $X_k$, 
\begin{equation*}
\begin{split}
 h_k^m \cdot [Z_k] &=h_k^m \cdot c_{n+p_k-k}((D_k/D_{k-1})^* \otimes D_{k-1}^\perp/E_{p_k})  \\
 &= c_{n+p_k-k+m}(D_{k-1}^\perp/E_{p_k} - D_k/D_{k-1})  \\
 &= c_{n+p_k-k+m}(D_{k-1}- D_{k-1}^* + V - E_{p_k} - F_{q_k} + F_{q_k}/D_k).
\end{split}
\end{equation*}

All terms in the last equation except the last are pullbacks from $Z_{k-1}$, and the last 
is the tautological quotient bundle from the projective bundle $X_k \to X_{k-1}$.  The standard 
Gysin formula for projective bundles, this one of relative dimension $n+q_k - k$, implies that
\begin{equation*}
\begin{split}
(\phi_k)_*(h_k^m) &= c_{n+p_k-k+m - (n+1-q_k - k)}(D_{k-1}- D_{k-1}^* + V - E_{p_k} - F_{q_k} ) \\
&=  c_{\lambda_k+m}(D_{k-1}- D_{k-1}^* + V - E_{p_k} - F_{q_k}).
\end{split}
\end{equation*}
With this, the proof of (1) is reduced to some formal algebra, which is given in Appendix C of \cite{K}.  It also follows from Proposition~\ref{p.gysin} below, by removing the powers of $1/2$, setting the $u_k$'s to $0$, the $g(k)$'s to $1$, and $d(k)=c(k)$.  (In this special case, the formula and its proof are exactly the same as in \cite{K}.)

\smallskip

As indicated in the introduction, it is also possible to deduce the theorem from Kazarian's result for 
Lagrangian subbundles.  For this, set $V' = V \oplus V$, and put a bilinear form on $V'$ by the formula 
\[
\langle \, (u,v), (u',v') \, \rangle = \langle u, u' \rangle - \langle v , v' \rangle, 
\]
where the right side uses the 
given bilinear form on $V$.  This is skew-symmetric and non degenerate, and the diagonal $\Delta_V$ is 
a Lagrangian subbundle, as is $E \oplus F$.  The locus where $\Delta_V$ meets each $E_{p_i} \oplus F_{q_i}$ in dimension at least $k_i$ is the 
same as the locus where each  $E_{p_i}$ meets $F_{q_i}$ in dimension at least $k_i$.  Note also that 
\[
c(V \oplus V) / c(\Delta_V) c(E_{p_i} \oplus F_{q_i}) = c(V)/c(E_{p_i})c(F_{q_i}),
\]
from which the Theorem follows.  Note that this geometry reflects the fact that the polynomial $\Q_{\bm{\tau}}(x,y)$ is 
obtained from the polynomial $\Q_{\lambda(\bm{\tau})}(t)$ by appropriate substitutions.

\smallskip

Although not needed for our theorems, we include here the proof that our polynomials $\Q_\lambda$ agree with those of \cite{IMN}, up to sign. 

\begin{proposition}
For any strict partition $\lambda$, the polynomial $\Q_\lambda$ is equal to the 
polynomial denoted $Q_\lambda(x|-t)$ in {\rm\cite{IMN}}. 
\end{proposition}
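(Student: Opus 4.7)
The plan is to reduce the identification to the one-row and two-row cases, exploiting that both our $\Q_\lambda$ and the Ivanov/IMN polynomial $Q_\lambda(x|-t)$ are defined as Pfaffians of skew-symmetric matrices built from two-row entries indexed by the parts of $\lambda$. If the entries agree, the Pfaffians agree. So the problem is reduced to two algebraic identities: one for $\Q_k$ and one for $\Q_{k\,l}$ (together with the sign change $t \mapsto -t$ flagged in the footnote).

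First, I would spell out the Ivanov/IMN definition explicitly: $Q_\lambda(x|t)$ is constructed via a Pfaffian formula whose entries are the two-row multivariate Q-functions $Q_{k\,l}(x|t)$, and $Q_{k\,l}(x|t)$ is itself given by an (implicit) recursion or a summation formula involving a difference of factorial-style products. Then I would handle the one-row case: $\Q_k = q(k)_k = Q_k + e_1(t_1,\dots,t_{k-1})Q_{k-1} + \cdots + e_{k-1}(t_1,\dots,t_{k-1})Q_1$. Ivanov's $Q_k(x|t)$ admits the same expansion but with $e_i(-t_1,\dots,-t_{k-1}) = (-1)^i e_i(t_1,\dots,t_{k-1})$, so after $t \mapsto -t$ the formulas match on the nose.

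Next, I would prove the two-row identity $\Q_{k\,l} = Q_{k\,l}(x|-t)$. The cleanest approach is to use the factorization $q(k) \cdot q(l)^* = \prod_{j=1}^{k-1}(1+t_j)\prod_{j=1}^{l-1}(1-t_j)$, which follows from $Q \cdot Q^* = 1$ and is exactly what drives Lemma~\ref{l.skewQkl}. Extracting the degree $k+l$ part of both sides and rearranging (using $Q_{k\,k}=0$) converts our Pfaffian entry into the form that appears in \cite{IMN}/\cite{Iv}, up to the advertised sign change in the $t$-variables. The identity $\Q_{k\,l} = -\Q_{l\,k}$, already verified in the paper, matches the skew-symmetry property imposed in \cite{IMN}, confirming the entries agree without any ambiguity in ordering.

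Finally, having matched both one-row and two-row entries under $t \mapsto -t$, the full Pfaffian identity $\Q_\lambda = Q_\lambda(x|-t)$ follows immediately from the Pfaffian expansion, using the odd-length reduction formula $\Pf_\lambda = \sum_k (-1)^{k-1} c(k)_{\lambda_k} \Pf_{\lambda \setminus \lambda_k}$ (equivalently, append $\lambda_{r+1}=0$). The main obstacle is purely bookkeeping: reconciling the sign conventions between our $t$ and the IMN/Ivanov $t$, and unwinding their (less explicit) definition of $Q_{k\,l}$ into the symmetric form we use. Once the sign conventions are aligned, the two-row identity is a short calculation built on $Q\cdot Q^*=1$, and no further algebraic input is needed.
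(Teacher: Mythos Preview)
Your reduction to the one-row and two-row cases via the Pfaffian expansion matches the paper exactly, and the one-row case is indeed ``true by definition'' as the paper notes (citing \cite{IMN}, Remark~4.4).  The divergence is in the two-row case, and here your sketch has a gap.

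The mechanism you invoke --- extracting the degree $k+l$ part of $q(k)\cdot q(l)^* = \prod_{j=1}^{k-1}(1+t_j)\prod_{j=1}^{l-1}(1-t_j)$ --- only tells you this part vanishes, which is exactly the content of Lemma~\ref{l.skewQkl}: it yields skew-symmetry $\Q_{k\,l}+\Q_{l\,k}=0$, not a comparison with the IMN polynomial.  You then assert that ``rearranging'' produces the IMN form, but you never say what explicit formula for $Q_{k\,l}(x|t)$ you are comparing against.  The paper's own remark that its formulas are ``more explicit than those given in \cite{IMN}'' is a warning sign: there is no ready-made closed expression in \cite{IMN} to match term-by-term, so a direct symbolic comparison is not the short calculation you suggest.

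The paper sidesteps this entirely.  Rather than unwinding the IMN definition, it uses Ivanov's \emph{interpolation characterization}: the $Q_\lambda(x|-t)$ are uniquely determined (as a $\Z[t]$-basis) by their leading terms together with the vanishing property $Q_\lambda(-t_{\nu_1},\ldots,-t_{\nu_r},0,\ldots|-t)=0$ whenever $\lambda\nsubseteq\nu$ (\cite{IMN}, Proposition~4.2).  One then writes $\Q_{k\,l}=\sum a_\mu\,Q_\mu(x|-t)$, notes $a_{k\,l}=1$ from the top term, and kills all lower $a_\mu$ by checking that $\Q_{k\,l}$ itself vanishes under the specialization $Q\mapsto\prod(1-t_{\nu_i})/(1+t_{\nu_i})$ for the relevant $\nu$ --- an elementary computation recorded as a separate lemma.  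This avoids ever needing an explicit IMN formula for the two-row case.  If you want to salvage a direct approach, you would need to locate (or derive) a concrete expression for $Q_{k\,l}(x|t)$ in \cite{Iv} or \cite{IMN} and then carry out the comparison; the generating-function identity alone does not do it.
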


\begin{proof}
This is true by definition (see \cite{IMN} Remark 4.4) when $\lambda$ has length $1$.  When 
$\lambda$ has length greater than $2$, both are equal to the same Pfaffian of a 
matrix whose entries are those of length $2$ (see \cite{IMN}, \S9).  So it suffices to prove that $\Q_{k \, l} = Q_{k \, l}(x|-t)$ 
for any $k > l > 0$.  This can be done by the method of Ivanov [Iv], as follows.  Since the  $Q_\lambda(x|-t)$ form
a basis for $\Gamma$ over $\Z[t]$, we may write $\Q_{k \, l} = \sum a_\lambda Q_\lambda(x|-t)$, the 
sum over strict partitions $\lambda$ of length at most $k+l$, with $a_\lambda$ homogeneous of degree $k+l-|\lambda|$ in $\Z[t]$.  Since the top terms of both are $Q_{k \, l}$, we know that $a_{k \, l} = 1$. 
We use the fact (\cite{IMN} Proposition 4.2) that for any strict partition $\nu$ of length $r$, $Q_\lambda(-t_{\nu_1}, -t_{\nu_2}, \ldots, -t_{\nu_r}, 0, \ldots|-t)$ vanishes if $\lambda \nsubseteq \nu$, and does not vanish if $\lambda = \nu$.  If the assertion is false, there is a coefficient $a_\lambda$ that is not zero, with $|\lambda|$ minimal and less than $k+l$.  Evaluating with $\nu = \lambda$  would give a 
nonzero result.  The following lemma shows that this is impossible. 
\end{proof}

\begin{lemma}
For any strict partition $\nu = \nu_1 > \dots > \nu_r > 0$ with $\nu_1 < k$ or $\nu_2 < l$, the specialization $Q \mapsto \prod_{i=1}^r (1-t_{\nu_i})/(1+t_{\nu_i})$ sends $\Q_{k \, l}$ to $0$.
\end{lemma}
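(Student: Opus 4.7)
The plan is to prove the lemma by rewriting $\sigma(\Q_{k\,l})$ as the coefficient of $u^k v^l$ in an explicit rational function of two formal variables, and then disposing of the two subcases of the hypothesis separately. First, I repackage the defining formula $\Q_{k\,l} = q(k)_k q(l)_l + 2\sum_{j=1}^l(-1)^j q(k)_{k+j}q(l)_{l-j}$ as
\[
\Q_{k\,l} \;=\; [u^k v^l]\,\frac{u-v}{u+v}\,q(k)(u)\,q(l)(v),
\]
where $q(m)(u) = Q(u)\prod_{j=1}^{m-1}(1+t_j u)$ with $Q(u) = 1 + Q_1 u + Q_2 u^2 + \cdots$ and $(u-v)/(u+v)$ is expanded as the power series $\sum_{j\ge 0} a_j (v/u)^j$ with $a_0 = 1$, $a_j = 2(-1)^j$ for $j\ge 1$ (immediate from $\sum_j a_j x^j = (1-x)/(1+x)$). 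Since $\sigma(Q)(u) = \prod_{i=1}^r (1-t_{\nu_i}u)/(1+t_{\nu_i}u)$, applying $\sigma$ yields
\[
\sigma(\Q_{k\,l}) \;=\; [u^k v^l]\,F(u)\,G(v)\,\frac{u-v}{u+v},
\]
where $F(u) = \prod_{j=1}^{k-1}(1+t_j u)\prod_{i=1}^r (1-t_{\nu_i}u)/(1+t_{\nu_i}u)$ and $G(v)$ is the analogous expression with $k$ replaced by $l$.

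When $\nu_1 < k$, every $\nu_i$ lies in $\{1,\ldots,k-1\}$, so each denominator factor $(1+t_{\nu_i}u)$ of $F(u)$ cancels against a factor in $\prod_{j=1}^{k-1}(1+t_j u)$, leaving $F(u)$ a polynomial of degree $k-1$ in $u$. Because the series $(u-v)/(u+v)$ contains only non-positive powers of $u$ and $G(v)$ has no $u$-dependence, the coefficient of $u^k$ is forced to vanish.

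The remaining case is $\nu_1 \ge k$ with $\nu_2 < l$ (with the convention $\nu_2 = 0$ if $r \le 1$). Then $\nu_2,\ldots,\nu_r$ all lie strictly below $l < k$, so cancelling the factors they contribute gives $F(u) = \tilde F(u)\,(1-su)/(1+su)$ and $G(v) = \tilde G(v)\,(1-sv)/(1+sv)$, where $s = t_{\nu_1}$, $\tilde F$ is a polynomial of degree $k-1$ in $u$, and $\tilde G$ is a polynomial of degree $l-1$ in $v$. The key identity is
\[
[u^{k+j}]\,F(u) \;=\; 2(-s)^{k+j}\,\tilde F(-1/s) \qquad\text{for all } j \ge 0,
\]
which follows from the polynomial division $\tilde F(u) = (1+su)P(u) + \tilde F(-1/s)$ combined with $(1-su)/(1+su) = 1 + 2\sum_{n\ge 1}(-s)^n u^n$: the polynomial part $(1-su)P(u)$ has degree $\le k-1$ and so contributes nothing to $[u^{k+j}]$ for $j\ge 0$. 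Substituting and using $\sum_{j\ge 0} a_j (-sv)^j = (1+sv)/(1-sv)$, I then compute
\[
\sigma(\Q_{k\,l}) \;=\; 2(-s)^k\,\tilde F(-1/s)\cdot [v^l]\Bigl(\tfrac{1+sv}{1-sv}\,G(v)\Bigr) \;=\; 2(-s)^k\,\tilde F(-1/s)\cdot [v^l]\,\tilde G(v) \;=\; 0,
\]
since $\deg\tilde G = l-1 < l$. The main obstacle is recognizing this final telescoping: the uncancelled $\nu_1$-factor in $F$ and the one in $G$ both feed into the generating-function integrand in exactly the way needed so that, after using $A(-sv) = (1+sv)/(1-sv)$, the $(1-sv)/(1+sv)$ inside $G$ cancels against it, isolating a single coefficient of the polynomial $\tilde G$ that vanishes by degree.
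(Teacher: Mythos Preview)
Your proof is correct and follows the same basic strategy as the paper's: direct computation with the specialized series $q(k)$ and $q(l)$.  The paper's treatment of the case $\nu_1 < k$ is identical in substance to yours (after specialization, $q(k)$ becomes a polynomial of degree $<k$, so every term $q(k)_{k+j}$ in the defining sum for $\Q_{k\,l}$ vanishes), and for the remaining case $\nu_1 \geq k > l > \nu_2$ the paper simply declares it ``a slightly longer but elementary exercise.''  Your generating-function packaging of $\Q_{k\,l}$ as $[u^kv^l]\,\frac{u-v}{u+v}\,q(k)(u)\,q(l)(v)$ is what lets you carry out that exercise cleanly: the observation that the lone uncancelled factor $(1-su)/(1+su)$ makes $[u^{k+j}]F(u)$ a pure geometric sequence in $(-s)^j$, so that summing against $a_j$ reconstitutes $(1+sv)/(1-sv)$ and cancels the matching factor in $G(v)$, is exactly the ``telescoping'' needed.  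This is not a different argument from what the paper has in mind, but it is a tidy way to organize the second case that the paper leaves to the reader.
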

\begin{proof}
This is an elementary calculation.  For example, if $\nu_1 < k$, then $c(k)$ maps to the product 
of all $(1 \pm t_j)$, for $j < k$, with the sign negative if $j$ is equal to some $\nu_i$ and positive otherwise. 
Since this is a polynomial of degree less than $k$, all $c(k)_{k+j} = 0$ for $j \geq 0$, so $Q_{k\, l}$ maps to $0$. 
The calculation when $\nu_1 \geq k > l > \nu_2$ is a slightly longer but elementary exercise.
\end{proof}

To complete the discussion of double Schubert polynomials in type C, one needs to make the 
Weyl group and difference operators act on the ring $\Gamma[x,y]$, compatibly with the actions 
on flag bundles.  We describe the actions for the $x$ variables, with those for the $y$ variables 
obtained by interchanging $x$'s and $y$'s.  The operators $s_i$ and $\partial_i$ are the same 
as in type A for $i \geq 1$, 
with $s_i$ fixing the $Q_k$'s as well as the $y$'s.  The operator $s_0$ takes $x_1$ to $-x_1$, 
and fixes the other $x_i$'s, but it takes $Q_k$ to $Q_k + 2\sum_{j=1}^k x_1^j Q_{k-j}$.  Equivalently, 
$s_0$ takes $Q$ to $(1+x_1)/(1-x_1)\cdot Q $, which is compatible with the specialization to 
$c(E^*)/c(F) = \prod_{i=1}^n(1-x_i)/c(F) \mapsto (1+x_1)\prod_{i=2}^n(1-x_j)/c(F) =  (1+x_1)/(1-x_1)\cdot c(E^*)/c(F) $.
The difference operator $\partial_0$ is defined by  $\partial_0(f) = (f - s_0(f))/(-2x_1)$; in particular, $\partial_0(Q_k) = \sum_{j=1}^k x_1^{j-1}Q_{k-j}$.   (See \cite{BH} and \cite{IMN} for more about these actions.)  

\smallskip
As an application, generalizing results of \cite{K} and \cite{LP},  
let $E$ be a vector bundle, with a flag of 
subbundles $E = E_1 \supset E_2 \supset \dots $, with $E_p$ of 
corank $p-1$ in $E$.  Let $F$ be a subbundle of $E$, of corank $m$. 
Let $\phi \colon F \to E^*$ be a symmetric map of vector bundles, 
which means that the induced composition $F \to E^* \to F^*$ is equal to 
its dual.  Given sequences of positive integers $k_1 < \dots < k_s$ 
and $p_1 > \dots > p_s$, with $p_i - p_{i+1} > k_{i+1} - k_i$, we 
have the locus where the kernel $\Ker(\phi)$ meets $E_{p_i}$ in 
dimension at least $k_i$.  Define $\lambda$ to be the minimal strict 
partition with  
$\lambda_{k_i} = p_i + m$.  Let $c(k_i) = c(E^* - F)\cdot c(E/E_{p_i})$, 
 set $c(k) = c(k_i)$ for $k_i \geq k$ minimal in general, and let $r = k_s$.

\begin{cor*}
The class of the locus where $\Ker(\phi)$ meets $E_{p_i}$ in 
dimension at least $k_i$ is the Pfaffian $\Pf_\lambda(c(1), \dots ,c(r))$.
\end{cor*}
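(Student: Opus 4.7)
The plan is to reduce directly to Theorem~\ref{t.theorem1} by embedding the symmetric-map problem in a symplectic bundle via the standard graph construction.

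Set $V = E \oplus E^*$, of rank $2n$ where $n = \rk E$, equipped with the canonical non-degenerate skew form $\langle (a,\alpha),(b,\beta)\rangle = \alpha(b) - \beta(a)$. The subbundle $L := E \oplus 0$ is Lagrangian, and $L_p := E_p \oplus 0$ gives an isotropic flag with $\rk L_p = n+1-p$, matching the indexing of Theorem~\ref{t.theorem1}. The symmetry of $\phi$ is equivalent to isotropy of the graph
\[
F' \;=\; \{(f,\phi(f)) : f \in F\} \;\subset\; V,
\]
since $\langle (f_1,\phi(f_1)),(f_2,\phi(f_2))\rangle = \phi(f_1)(f_2) - \phi(f_2)(f_1)$ vanishes on $F \times F$ precisely when $\phi$ is self-dual; since $\rk F' = n-m$, the bundle $F'$ plays the role of $F_{m+1}$ in an isotropic flag. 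The first projection $V \to E$ restricts to an isomorphism $F' \cap L_p \isom \Ker(\phi) \cap E_p$, so the locus in the corollary coincides with the degeneracy locus $\{\dim(L_{p_i} \cap F') \geq k_i\}$ on $X$, with its natural rank-minor scheme structure.

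This locus is precisely $\Omega_{\bm{\tau}}$ of Theorem~\ref{t.theorem1} for the triple $\bm{\tau}$ with $\bk = (k_1, \ldots, k_s)$, $\bp = (p_1, \ldots, p_s)$, and $\bq = (m{+}1, \ldots, m{+}1)$. Since $q_i - q_{i+1} = 0$, condition $(*)$ reduces to $p_i - p_{i+1} > k_{i+1} - k_i$, the given hypothesis, and the associated strict partition satisfies $\lambda_{k_i} = p_i + q_i - 1 = p_i + m$. Appealing to Theorem~\ref{t.theorem1} in the form where only $L_{p_i}$ and $F' = F_{m+1}$ need be present at the relevant codimensions, the class equals $\Pf_\lambda(\tilde c(1), \ldots, \tilde c(r))$ with $r = k_s$ and
\[
\tilde c(k_i) \;=\; c(V - L - F_1) \cdot c(L/L_{p_i}) \cdot c(F_1/F') \;=\; \frac{c(V)}{c(L_{p_i})\,c(F')},
\]
so any Lagrangian extension $F_1 \supset F'$ (introduced via a Lagrangian flag bundle over $X$ if one insists on naming it) cancels out of the expression. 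Substituting $c(V) = c(E)\,c(E^*)$, $c(L_{p_i}) = c(E_{p_i})$, and $c(F') = c(F)$ via the projection isomorphism $F' \isom F$, one obtains $\tilde c(k_i) = c(E/E_{p_i}) \cdot c(E^* - F)$, which is the $c(k_i)$ of the statement.

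The only substantive points to verify are the translation of symmetry into graph isotropy and the cancellation of the auxiliary Lagrangian from the Chern-class formula; both are routine, so the corollary follows as an immediate specialization of Theorem~\ref{t.theorem1}. If anything, the ``hard part'' is just recognizing that the right triple is the one with all $q_i$ equal to $m+1$, so that the vexillarity inequality $(*)$ collapses to the stated hypothesis on the $p_i$'s.
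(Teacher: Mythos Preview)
Your proof is correct and follows essentially the same route as the paper: both embed the problem in $V=E\oplus E^*$ with its standard skew form, identify the graph of $\phi$ with an isotropic subbundle $F_{m+1}$, observe that $\Ker(\phi)\cap E_{p_i}$ corresponds to $(E_{p_i}\oplus 0)\cap F_{m+1}$, and apply Theorem~\ref{t.theorem1} with all $q_i=m+1$. You have supplied a bit more detail on the Chern-class bookkeeping than the paper does, but the argument is the same.
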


The proof is by setting $V = E \oplus E^*$, with its standard skew-symmetric 
form: $\langle \, (e,\alpha), (e',\alpha') \, \rangle = \alpha(e') - \alpha'(e) $.  Then $E = E \oplus 0$ 
is a Lagrangian subbundle of $V$.  Let $F_q$ be the graph of $\phi$, 
with $q = m+1$.  Then $F_q$ is isotropic with $\rank(F_q) = \rank{E} +1 - q$.  The locus 
where $F_q$ meets $E_{p_i}$ in dimension at least $k_i$ is the required locus.
The formula is a special case of the main theorem, taking $q_i = q$ for all $i$. 

As a special case, the locus where the kernel of $\phi$ meets a given $E_p$ in dimension 
at least $k$ is given by $\Pf_\lambda(c, \dots, c)$, where $\lambda = (p+m+k-1, \dots ,p+m)$
and $c = c(E^* - F)$.  In particular, the locus where a symmetric 
map $E \to E^*$ drops rank by at least $k$ is given by the polynomial $\Pf_{(k, \dots, 1)}(c, \dots, c)$, 
with $c = c(E^* - E)$.

\section{Type B, Orthogonal Bundles of Odd Ranks}

The polynomials in types B and D are based on the ring 
\[
\Gamma' = \Z[P_1, P_2, \dots]/(P_k^2 + 2\sum_{j=1}^{k-1} (-1)^j P_{k+j}P_{k-j} + (-1)^kP_{2k},  \,\, k = 1, 2, \dots) .
\]
One can identify $P_k$ with $\frac{1}{2}Q_k$, so $\Gamma$ becomes a subring of $\Gamma'$; but note that 
$P_0$ and $Q_0$ are both $1$, so $P = 1 + P_1 + \dots$.  With this identification, for each strict partition $\lambda$, one can set $P_\lambda = 2^{-r}Q_\lambda$, where $r$ is the length of $\lambda$.  Equivalently, 
 $P_{k\, l} = P_kP_l + 2\sum_{j=1}^{l-1} P_{k+j}P_{l-j} + (-1)^lP_{k+l}$, and $P_\lambda$ is the Pfaffian of the matrix with $(i,j)$-entry $P_{\lambda_i \, \lambda_j}$.

There are three different bases for the ring $\Gamma'[t]$, each parametrized by strict partitions, one for type B and two for type D.  
We describe here the one for type B, which we denote by $\P_\lambda$.\footnote{In the notation of \cite{IMN}, these would be written $P_\lambda(x|0,-t)$.}  These can be defined by the formula  $\P_\lambda = 2^{-r}\Q_\lambda$, with $r$ the length of $\lambda$.  
Equivalently, setting $b(k) = p(\lambda_k)$, where $p(k) = P\cdot \prod_{j=1}^{k-1}(1+t_j)$, we have $\P_\lambda = \Pf_{\lambda}(b(1), \dots, b(r))$.  

Triples are defined exactly as in type C.  For a triple $\bm{\tau}$, set $b(k_i) = P \cdot \prod_{j=1}^{p_i - 1} (1 + x_j) \cdot \prod_{j=1}^{q_i - 1} (1 + y_j)$, 
and set $b(k) = b(k_i)$, where $k_i$ is minimal greater than or equal to $k$. 
Then ${\bf{P}}_{\bm{\tau}}(x,y)$ is defined to be the Pfaffian $\Pf_{\lambda}(b(1), \dots, b(k_s))$.  As in type C, these can be defined by substituting the $x$'s and $y$'s for the $t$'s in $\P_\lambda$, in the same orders.  The definition of the signed permutation $w(\bm{\tau})$ of a triple is defined by the same prescription as in type C.

Exactly as in type C, these polynomials can be used to define all the double Schubert polynomials $\frak{B}_w$ of type B, 
starting from the polynomial for $w_\circ^{(n)}$ being ${\bf{P}}_{\bm{\tau}}(x,y)$, where $\bm{\tau}$ is the triple with ${\bf{k}} = 1 \, 2 \, \dots \, n$ and  ${\bf{p}} = {\bf{q}} = n \, n\!-\!1 \, \dots \, 1$.
The only difference is that the difference operator $\partial_0$ is defined by the formula  $\partial_0(f) = (f - s_0(f))/(-x_1)$.

The geometric setting for type B is a vector bundle $V$ of odd rank $2n+1$, together with a nondegenerate symmetric bilinear form on $V$ (here with values in the trivial line bundle).  We assume $V$ comes with two flags of isotropic subbundles, $E = E_1 \supset E_2 \supset \dots$, and $F = F_1 \supset F_2 \supset \dots$, with $E_i$ and $F_i$ having ranks $n+1-i$, as in 
type C.  Degeneracy loci $\Omega_{\bm{\tau}}$ are also defined as in type C, requiring $\dim(E_{p_i} \cap F_{q_i})\geq k_i$, 
for the corresponding triple.
For general signed permutations $w$, one needs to add the bundles $F_{-i} := F_{i+1}^\perp$ for $i \geq 0$; then $\Omega_w$ is the locus where 
the dimension of $E_p \cap F_q$ is  at least the number of $a \leq p$ such that $-w(a) \geq q$, for $p \in \{1, \dots ,n\}$ and $q \in \{\pm 1, \dots ,\pm n\}$.  
There is one important difference, however.  In this situation, even when the bundles are in general position, the determinants that locally specify rank 
conditions do not cut out these degeneracy loci with their reduced structures; in fact, the factors of $2$ that occur come from the multiplicities with which they occur.  We refer to [F3] and [FP] for a discussion of these points.  We will continue to 
write loci by these inequalities, but they need to be understood to be the reduced subschemes defined by the corresponding equations.

We define $E_0$ to be $E^\perp$, and $E_{-i} = E_{i+1}^\perp$ for $i \geq 1$.  The bilinear form determines an isomorphism of $V/E_0$ with the dual $E^*$ of $E$, and $V/F_0 \cong F^*$.
From the filtration $E_1 \subset E_0 \subset V$ we have an isomorphism $\bigwedge^{2n+1}V = \bigwedge^nE_1 \otimes E_0/E_1 \otimes \bigwedge^n(V/E_0)$, implying that $\bigwedge^{2n+1}V \cong E_0/E_1$; since the same is true for   $F_\bullet$, we have an isomorphism $E_0/E_1 \cong F_0/F_1$.  This implies that 
\[
c(V - E_0 - F_1) = c(V - E_1 - F_0),
\]
and it is to this that $Q$ will specialize.  To specialize $P$, we need the basic fact, based on Edidin and Graham \cite{EG}, that these Chern classes $c_i(V-E_0 - F_1)$ are canonically divisible by $2$, for $i > 0$.  In \cite{EG} classes $a_i$ are produced so 
that $2a_i = c_i(E) - c_i(F)$, for $1 \leq i \leq n$.  
Then $c(V - E_0 - F_1) = c(E^*)/c(F) = (c(F^*) + 2\sum_{i=1}^n(-1)^i a_1)/c(F)$.  
It therefore suffices to note that, for any bundle $F$, $c(F^*)/c(F)$ has all terms of positive degree canonically divisible by $2$, which follows from the identity $(1-y_i)/(1+y_i) = 1 - 2y_i + 2y_i^2 + \dots$.  (To see that there is a canonical division by $2$ of these classes, one can also, as in \cite{F3},  reduce to the situation where there is no $2$-torsion.)

We can therefore specialize the $P_k$ to these classes, which we denote for simplicity by $\frac{1}{2}c_k(V - E_0 - F_1)$. 
Set $x_i = c_1(E_i/E_{i+1})$ and $y_i = c_1(F_i/F_{i+1})$.  

\begin{thm}\label{t.theorem2}
{\rm (1)} The class of $\Omega_{\bm{\tau}}$ is obtained from ${\bf{P}}_{\bm{\tau}}(x,y)$ by this substitution. 

\smallskip
\noindent
{\rm (2)} The polynomial ${\bf{P}}_{\bm{\tau}}(x,y)$ is equal to the double Schubert polynomial 
$\frak{B}_w(x,y)$, where $w = w(\bm{\tau})$.
\end{thm}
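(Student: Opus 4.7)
The plan is to follow the same three-step strategy as in the proof of Theorem~\ref{t.theorem1}, with modifications needed for the odd-rank orthogonal setting. I first construct the tower $X_s \to \dots \to X_1 \to X$, where $X_k = \P(F_{q_k}/D_{k-1})$ over $X_{k-1}$ with tautological line $D_k/D_{k-1}$, and let $Z_k \subset X_k$ be the reduced locus where $D_k \subset E_{p_k}$. Then $\phi \colon Z_s \to X$ is birational onto $\Omega_{\bm{\tau}}$, so it suffices to compute $\phi_*[Z_s]$. On $X_k|_{Z_{k-1}}$, the chain $D_{k-1} \subset E_{p_k} \subset D_{k-1}^\perp$ holds (since $E_{p_k}$ and $D_k$ are isotropic and both contain $D_{k-1}$), so $Z_k$ is set-theoretically the vanishing of a section of $\Hom(D_k/D_{k-1}, D_{k-1}^\perp/E_{p_k})$, of rank $n+1-k+p_k$.

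The essential new point is scheme-theoretic: in type B the defining equations vanish on the reduced locus with multiplicity $2$, so the naive top Chern class equals $2\cdot[Z_k]$; compare \cite{FP} and \cite{F3}. Carrying out the same inductive computation, Pfaffian manipulations, and Kazarian-style Gysin pushforwards as in Section~2 (the formal algebra of Appendix C of \cite{K} transfers verbatim), one obtains the Pfaffian formula with $Q$-classes specialized to $c(V-E_0-F_1)$, but representing $2^r\cdot[\Omega_{\bm\tau}]$ rather than $[\Omega_{\bm\tau}]$ itself, where $r=k_s$ is the length of $\lambda$. The Edidin-Graham divisibility of $c(V-E_0-F_1)$ by $2$ recalled at the start of this section lets us set $P=\tfrac12 c(V-E_0-F_1)$, and the Pfaffian identity $P_\lambda = 2^{-r}Q_\lambda$ absorbs the excess factor exactly. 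This yields $[\Omega_{\bm\tau}] = \Pf_\lambda(b(1),\dots,b(r)) = {\bf P}_{\bm{\tau}}(x,y)$ with $b(k)$ as defined, proving part~(1).

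Part~(2) follows formally: specializing to the universal flag bundle over the classifying space for the torus of type B, the variables $x_i$ and $y_i$ become algebraically independent. The Schubert polynomials $\frak{B}_w$ form a $\Z[y]$-basis of $\Gamma'[x,y]$ mapping to the Schubert-class basis of the inverse limit of equivariant cohomology rings, so any two polynomials in $\Gamma'[x,y]$ representing the same equivariant class must coincide. Both ${\bf P}_{\bm{\tau}}(x,y)$ (by part~(1)) and $\frak{B}_{w(\bm\tau)}(x,y)$ (by the defining divided-difference recursion from $w_\circ^{(n)}$, for which the two agree by construction) represent $[\Omega_{w(\bm\tau)}]$, so they are equal.

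The main obstacle will be the careful bookkeeping of the factors of $2$: verifying that each defining section vanishes with multiplicity exactly $2$ on the reduced locus (as opposed to some higher or position-dependent multiplicity), and that these contributions accumulate to precisely the $2^r$ needed to convert the type C Pfaffian identity into the type B one, consistently with the Edidin-Graham normalization. A secondary technical point is to check that the type C Gysin step of Lemma~\ref{l.typeCstep} really does go through unchanged for the odd-rank bundle, with only the shift in the rank of $D_{k-1}^\perp/E_{p_k}$ affecting the bookkeeping.
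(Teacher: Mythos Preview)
Your proposal is correct and follows essentially the same approach as the paper: the paper's proof also reduces to the type C argument, noting that the only difference is that the defining sections vanish doubly along each $Z_k$, producing exactly the factor $2^{-r}$ needed to pass from $Q$-Pfaffians to $P$-Pfaffians. Your identification of the multiplicity-$2$ bookkeeping and the rank shift in $D_{k-1}^\perp/E_{p_k}$ as the two technical points to watch is precisely right, and your deduction of part~(2) from part~(1) via the equivariant/flag-bundle universality argument matches the paper's reasoning.
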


Note that, for any signed permutation $w$,  $\frak{B}_w = 2^{-r(w)} \frak{C}_w$, where $r(w)$ is the number of 
barred values in its one-line notation.  As in type C, the polynomial $\frak{B}_w$ is the class of the locus 
$\Omega_w$, for flags of bundles in general position.

The proof is almost identical to that in type C.  As before, the problem is reduced to the case where $\bk = 1 \, \dots \, r$. The same construction leads to bundles $X_k$ with loci $Z_k$ defined by the vanishing of sections of bundles.  The only difference is that these sections vanish doubly along the loci $Z_k$.  This leads 
to the factors of $2^{-r}$, since there are $r$ such bundles.

The proof using the diagonal trick must be modified from type C, as $V \oplus V$ would take us from type B 
to type D.  To do this, one uses a splitting principle to get to the situation where $V = U \oplus M$ where $U$ is 
a bundle of rank $2n\!-\!1$ with a symmetric form, and $M$ is a line bundle whose symmetric form is given by an isomorphism of $M^{\otimes 2}$ with the trivial bundle.  Then $E$ and $F$ are subbundles of $U$.  One then takes 
$V' = U \oplus U \oplus M$, with pairing $\langle \, (u,v,z), (u',v', z') \, \rangle = \langle u, u' \rangle -  \langle v, v' \rangle + \langle z, z' \rangle$.  Then $\Delta_U \oplus 0$ and $E \oplus F \oplus 0$ are maximal isotropic subbundles, and 
one proceeds as before.  Note also that the Chern class formulas agree, since 
\[
c(V')/c(\Delta_U\oplus M)c(E\oplus F) = c(V)/c(E\oplus M) c(F) = c(V)/c(E_0) c(F).  
\]

\section{Type D, Orthogonal Bundles of Even Ranks}

Type D differs from type B geometrically because of the existence of two families 
of isotropic subspaces of a vector space with a nondegenerate symmetric bilinear 
form.  In order to treat both families, there will be ``even" and ``odd" polynomials. 
To describe them uniformly, we introduce the notion of a {\em partition of type D}, 
by which we mean sequence 
\[
\lambda = \lambda_1 > \lambda_2 > \dots > \lambda_r \geq 0
\]
The {\em length} of $\lambda$ is defined to be $r$.  Each partition of type D 
determines a strict partition, which is itself if $\lambda_r$ is positive, and 
$(\lambda_1, \lambda_2 , \dots , \lambda_{r-1})$ if $\lambda_r = 0$.  
Each strict partition comes from two partitions of type D, one of even length, 
and one of odd length.  For example $(4,2,1,0)$ has even length, $(4,2,1)$ has 
odd, both with underlying strict partition $(4,2,1)$.  We write $Q_\lambda$ and 
$P_\lambda$ for the elements in $\Gamma$ and $\Gamma'$ determined by 
the underlying strict partition of a partition $\lambda$ of type D.

We will define a type D polynomial $\R_\lambda = \R_\lambda(t)$ in $\Gamma'[t]$ 
for each partition $\lambda$ of type D.  Those for $\lambda$ of even length, as 
well as those for $\lambda$ of odd length, will form a basis for $\Gamma'[t]$  over 
$\Z[t]$.  Those of even length will be equal to the polynomials appearing in \cite{IMN}, 
although our description is different.   
Let $e(k) =  \prod_{i=1}^k(1+t_i) = 1 + e(k)_1 + \dots + e(k)_k$, so $e(k)_i$ is the 
$i^{\text th}$ elementary symmetric polynomial in $t_1, \dots, t_k$.  Let  
$r(k) = Q \cdot e(k)$.  Define 
\[
\R_k = \frac{1}{2}(r(k)_k + e(k)_k) = P_k + e(k)_1P_{k-1} + \dots + e(k)_k,
\]
and define
\begin{equation*}
\begin{split} 
\R_{k \, l} &= \frac{1}{4} \bigl( (r(k)_k - e(k)_k)(r(l)_l+e(l)_l) + 2 \sum_{j=1}^l (-1)^j r(k)_{k+j}r(l)_{l-j} \bigl) \\
&= (P_k + e(k)_1P_{k-1} + \dots + e(k)_{k-1}P_1)(P_l + e(l)_1P_{l-1} + \dots + e(l)_{l})  \\
&\,\,\, +   \sum_{j=1}^l (-1)^j (P_{k+j} + \dots + e(k)_kP_j)(2P_{l-j} +2e(l)_1P_{l-j-1} + \dots + e(l)_{l-j}).
\end{split}
\end{equation*}
These polynomials are in $\Gamma'[t]$, and the matrix $\R_{k \, l}$ is skew-symmetric: 
$\R_{k \, k} = 0$ and $\R_{k \, l}+ \R_{l \, k} = 0$.  This follows from the equation
\[
\sum_{j\in \Z} (-1)^j r(k)_{k+j} r(l)_{l-j} = e(k)_k e(l)_l \,\, ,
\]
which follows from the fact that $r(k)\cdot r(l)^* = e(k)\cdot e(l)^*$.

Note that $\R_{k\, 0} = \frac{1}{2}(r(k)_k - e(k)_k) = P_k + e(k)_1P_{k-1} + \dots + e(k)_{k-1}P_1$, 
which is not equal to $\R_k$.  Note also that $\R_0 = 1$ (as is the polynomial for the empty partition).

From this skew-commutativity, we can define $\R_\lambda$ to be the Pfaffian of the matrix whose $(i,j)$ entry is $\R_{\lambda_i \,\lambda_j}$.  When $r$ is odd, this is interpreted as before:
\begin{equation}\label{e.oddpf}
  \R_\lambda = \sum_{k=1}^r (-1)^{k-1} \R_{\lambda_k}\, \R_{\lambda_1,\ldots,\hat{\lambda_k},\ldots,\lambda_r}.
\end{equation}
Note that one cannot, as in types B and C, reduce the odd case to the even case by adjoining 
a $0$ to the partition, as we have seen that $\R_k \neq \R_{k\,0}$.

The leading term of $\R_\lambda$ is $P_\lambda$, which implies that both the polynomials for $\lambda$ of even length and those of odd length form a basis of $\Gamma'[t]$ over $\Z[t]$.

These polynomials, for $r$ even, agree with those in \cite{IMN}; this means that our $\R_\lambda$ is equal to their $P_\lambda(x | -t)$ (where the subscript $\lambda$ to the $P$ refers to the underlying strict partition).  The proof is similar to that of Proposition 2.4; one must show that for $\nu$ as in Lemma 2.5, the specialization  $Q \mapsto \prod_{i=1}^r (1-t_{\nu_i+1})/(1+t_{\nu_i+1})$ sends $4\R_{k \, l}$ to $0$; this too is an elementary calculation. 

The proof of the formula (2) in the introduction, comparing the expansion of $\R_{\lambda}$ and that of $\Q_{\lambda(+)}$, where $\lambda(+) = (\lambda_1+1, \dots, \lambda_r+1)$, also follows from 
these specialization techniques, by verifying that both expansions vanish by the same specializations.  

As in type C, these polynomials are unchanged if $e(\lambda_i)$ is replaced by $e(\lambda_{i+1})$, 
whenever $\lambda_i = \lambda_{i+1} + 1$.  In particular, they are symmetric in the variables $t_j$ and $t_{j+1}$ if $j$ is not one of the integers $\lambda_i$ for which $\lambda_i > \lambda_{i+1} + 1$ or $j = r$ (i.e.~ $j$ is not a row number of a corner of the skew diagram of $\lambda$, allowing the last row of this diagram to have an empty corner).   

More generally, for $\lambda$ a partition of type $D$, we define a multi-Schur Pfaffian, 
denoted 
$\Pf_{\lambda}(\,c(1)|d(1),\ldots,c(r)|d(r)\,)$ 
as follows.
The elements $c(i)$ are polynomials of degree at most $\lambda_i$, with constant term $1$, and each $c(i)$ is required to divide $d(i)$ as well as all $c(j)$ for $j<i$.  Furthermore, they satisfy $d(i) \, d(j)^* = c(i)\, c(j)^*$ for all $i<j$.  The Pfaffian is defined from the same matrix used to define $\R_\lambda$, with $d(k)$ replacing $r(\lambda_k)$ and $c(k)$ replacing $e(\lambda_k)$, and the factors of $1/2$ and $1/4$ are removed.  

Note that, unlike the situations in types B and C, one must specify both the $d(i)$ and the $c(i)$, and not just the former, to form this Pfaffian.  When all the polynomials $c(i)$ have degree strictly less than $\lambda_i$, though, this reduces to $\Pf_\lambda( d(1),\ldots,d(r) )$.

A {\bf{triple}} $\bm{\tau} = (\bk, \bp, \bq)$ {\bf{of type D}} consists of 
three sequences of integers of the same length  $s$, with the $k$'s still required to be positive, but the $p$'s and $q$'s allowing zeros:
\[
\bk  =    0 < k_1 < \dots < k_s, \,\,\,\, \bp  =   p_1 \geq \dots \geq p_s \geq 0, \,\,\,\,
\bq  =  q_1 \geq \dots \geq  q_s \geq 0,
\]
satisfying the same conditions that  
\[
\tag{*} \,\,\,\,\, (p_i - p_{i+1}) + (q_i - q_{i+1}) \, > \, k_{i+1} - k_i 
\]
for $1 \leq i \leq s-1$.  We set $r = k_s$.  A triple $\bm{\tau}$ of type D determines a partition $\lambda = \lambda(\bm{\tau})$ of type D and length $r$, defined 
to  be the minimal strict partition with 
\[ 
\lambda_{k_i} = p_i + q_i.
\]
Note that this allows $\lambda_{r}$ to be $0$, when $p_s = q_s = 0$.

The signed permutation $w(\bm{\tau})$of a triple $\bm{\tau}$ is the $w$ of minimal length such 
that the number of $a > p_i$ with $w(a) = \bar{b}$ and $b > q_i$.  It can be constructed by a 
similar prescription as in type C, but using positions and barred values strictly greater rather than 
weakly greater.  
Alternatively, to a triple $\bm{\tau}$ of type D, one can construct 
a triple 
 $\bm{\tau(+)}$ of type C or B, 
by replacing each $p_i$ by $p_i+1$ and replacing each $q_i$ by $q_i+1$.  
Each triple of type C arises 
uniquely in this way.  As signed permutations, $w(\bm{\tau(+)}) = w(\bm{\tau})$.
The partitions are different, however: $\lambda(\bm{\tau(+)}) = \lambda(\bm{\tau})(+)$.  The 
signed permutations arising this way are the {\em vexillary} signed permutations of type D.  
One gets the same signed permutations as in types B and C, but note that, in addition to 
those in the usual Weyl group, consisting of signed permutations with an even number of 
barred values, we also get those in the coset consisting of those with an odd number.

As in type C, one can allow redundant triples, with the strict inequalities in (*) replace by weak inequalities; the fact that the polynomials don't change when one reduces a redundant triple to a triple is valid for these polynomials, for essentially the same reason (see Lemma 2.2).   As in type C, 
any triple can be extended to a redundant triple with $\bk = 1\;2\;\cdots\;r$.

The polynomial ${\bf{R}}_{\bm{\tau}}(x,y)$ in $\Gamma'[x,y]$ determined by a triple $\bm{\tau}$ of type D can be defined by substitution from $\R_{\lambda({\bm{\tau}})}(t)$, as in type C.  One substitutes (in any order) $x_1, \dots, x_{p_s}, y_1, \dots, y_{q_s}$ for $t_1, \dots, t_{\lambda_s}$, and then substitutes $x_{p_s+1}, \dots, x_{p_{s-1}}, y_{q_s+1}, \dots, y_{q_{s-1}}$ for $t_{\lambda_s+1}, \dots, t_{\lambda_{s-1}}$, continuing the substitution in groups (and using the symmetry of the polynomials $\R_{\lambda}(t)$). 
Equivalently, 
\begin{equation}\label{e.rtau}
{\bf{R}}_{\bm{\tau}}(x,y) = \frac{1}{2^r}\Pf_{\lambda(\bm{\tau})}(\, c(1)|d(1),\ldots,c(r)|d(r)\,),
\end{equation}
where $c(k_i) = \prod_{j=1}^{p_i}(1+x_j)\cdot \prod_{j=1}^{q_i}(1+y_j)$, $c(k) = c(k_i)$ for 
$k_i$ minimal with $k_i \geq k$, and $d(k) = Q\cdot c(k)$ for all $k$.

\smallskip

The geometric setting for degeneracy loci in type D is the following.  We have a vector bundle $V$ of rank $2n$, with a nondegenerate symmetric form, and flags $V \supset E_0 \supset E_1 \supset \cdots $ and  $V \supset F_0 \supset F_1 \supset \cdots$ of isotropic subbundles.  Here $\rk E_i = \rk F_i = n-i$.  
Set $E = E_0$ and $F = F_0$.    The specialization will take $Q$ to 
\[
c(V - E - F) = c(E^*)/c(F) = c(F^*)/c(E). 
\]
In type D, $P_i$ will map to $\frac{1}{2}c_i(V - E - F)$.  

For $i=1,\ldots,n$, set $x_i = c_1(E_{i-1}/E_i)$ and $y_i = c_1(E_{i-1}/E_i)$.  
Note that $d(\lambda_{k_i})$ maps to $c(V-E_{p_i}-F_{q_i})$.

Recall that in type D there are two families of maximal isotropic bundles, depending on the dimension of intersection of $E$ and $F$, which is constant modulo $2$ on connected components.  We assume for simplicity that $X$ is connected.  To choose a component, the convention we use, which is compatible with stability, is to require that the dimension of $E \cap F$ be even when 
the specified integer $r$ is even, and odd when $r$ is odd.  Note that this does not depend on the rank $n$ of $E$ and $F$, so it is compatible with stability.  (Note, however, that $E$ and $F$ will be in the same family in the conventional sense when $r$ and $n$ have the same parity, and in opposite families otherwise.)  

The locus $\Omega_{\triple}$ for a triple can be described by requiring that the dimension of the intersection $E_{p_i} \cap F_{q_i}$ be at least $k_i$ for all $i$, but now, in addition to taking 
care that the determinants do not cut out the reduced loci, one must also take this locus to be the closure of the reduced locus where all the intersections have the asserted dimension.  (See \cite{F3} and \cite{FP} for a discussion of this.)  

The double Schubert polynomials $\mathfrak{D}_w$, for any signed permutation $w$, are constructed by  
the same procedure as in types C and B, but with 
the following differences.  There are two top classes for a given $n$.   One comes from the triple 
\[
\bm{\tau} = (1 \; 2\, \ldots\, n\!-\!1, \,\, n\!-\!1 \,\ldots\, 2 \; 1, \,\, n\!-\!1 \, \ldots \,2 \; 1),
\]
 with $\lambda(\bm{\tau}) = (2n-2, \dots, 2)$, and $w(\bm{\tau}) = 1\; \bar{2} \ldots \bar{n}$.  The other comes from the triple 
\[
\bm{\tau} = (1 \; 2 \,\ldots\, n, \,\, n\!-\!1 \ldots 2 \; 1 \; 0, \,\, n\!-\!1 \,\ldots\, 2 \; 1 \; 0),
\]
with $\lambda(\bm{\tau}) = (2n-2, \dots, 2, 0)$, and $w(\bm{\tau}) = \bar{1}\; \bar{2} \; \cdots\; \bar{n}$.  
So one has the top polynomials $\mathfrak{D}_{w(\bm{\tau})}$ for these vexillary signed permutations.

The other polynomials are defined from these by sequences of difference operators, as before.  The $s_i$ and $\partial_i$ for $i > 0$ are the same as in types A, C, and B.  The new generator of the Weyl group is $s_{\hat{1}}$, which can be identified with $s_0s_1s_0$ in type C or B.  This acts on $\Gamma'[x,y]$ by taking $x_1$ to $-x_2$, $x_2$ to $-x_1$, and leaving the other $x$'s and all $y$'s unchanged.  One 
calculates that, setting  $v_j = \sum_{a+b=j} x_1^ax_2^b$,
\[
  s_{\hat{1}}(P_k) = P_k + 2(x_1+x_2)\left(\sum_{j=1}^{k-1} v_{j-1} P_{k-j} \right) + (x_1+x_2) v_{k-1} .
\]
The difference operator is $\partial_{\hat{1}}(f) = (f - s_{\hat{1}}(f))/(-x_1 - x_2)$, and one has 
\[ 
\partial_{\hat{1}}(P_k) = 2 \sum_{j=1}^{k-1} v_{j-1} P_{k-j} + v_{k-1}.
\]

\begin{thm}\label{t.mainD}
{\rm (1)} The class of $\Omega_{\bm{\tau}}$ is ${\bf{R}}_{\bm{\tau}}(x,y)$. 

\smallskip
\noindent
{\rm (2)} The polynomial ${\bf{R}}_{\bm{\tau}}(x,y)$ is equal to the double Schubert polynomial 
$\frak{D}_w(x,y)$, where $w = w(\bm{\tau})$.
\end{thm}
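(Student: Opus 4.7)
The plan follows the pattern already established in type C, adapted for the two families of maximal isotropic subbundles and the more intricate algebraic structure specific to type D. First I would extend $\triple$ to a redundant triple with $\bk = 1\,2\,\dots\,r$; by the type-D analogue of Lemma 2.2 (which the text indicates holds with essentially the same proof), the polynomial ${\bf{R}}_{\triple}(x,y)$ is unchanged under this extension, and $w(\triple)$ and $\Omega_{\triple}$ are also preserved. This reduces part (1) to the case where every integer $k$ from $1$ to $r$ appears in $\bk$.

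Next I would construct the tower of projective bundles $X_r \to X_{r-1} \to \cdots \to X_1 \to X$, where $X_k$ is the projectivization of $F_{q_k}/D_{k-1}$ and carries a tautological line subbundle $D_k/D_{k-1}$, so the composite yields a tautological isotropic flag $D_1 \subset \cdots \subset D_r$ with $D_k \subset F_{q_k}$ of rank $k$. The locus $Z_k \subset X_k|_{Z_{k-1}}$ where $D_k \subset E_{p_k}$ is defined by the vanishing of a section of $(D_k/D_{k-1})^* \otimes (D_{k-1}^\perp/E_{p_k})$; the key type-D feature, highlighted by Kazarian in the Lagrangian case, is that this section vanishes to order two precisely when $D_k$ and $E_{p_k}$ land in the same family of maximal isotropic subbundles (that is, when $p_k = q_k = 0$), and to order one otherwise. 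Because $Z_r$ maps birationally onto $\Omega_{\triple}$, the class of $\Omega_{\triple}$ equals $\phi_*[Z_r]$, where $\phi \colon Z_r \to X$ is the composite. The factor $1/2^r$ and the separation of $c(k)$ from $d(k)$ in the Pfaffian \eqref{e.rtau} encode exactly this bookkeeping, together with the choice of connected component that pins down the parity of $\dim(E \cap F)$ in the geometric setup.

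The heart of the proof is then the Gysin pushforward identity: setting $h_k = -c_1(D_k/D_{k-1})$, for any nonnegative integers $m_1,\dots,m_r$,
\[
\phi_*\bigl(h_1^{m_1}\cdots h_r^{m_r}\bigr) \;=\; \frac{1}{2^r}\,\Pf_{\lambda_1+m_1,\dots,\lambda_r+m_r}\bigl(c(1)|d(1),\dots,c(r)|d(r)\bigr),
\]
with $c(k)$ and $d(k)$ as in \eqref{e.rtau}; setting all $m_i = 0$ then gives part (1). The main obstacle will be proving the type-D analogue of Lemma~\ref{l.typeCstep}, namely the one-step formula for $(\phi_k)_*(h_k^m)$ in terms of $c(k)$, $d(k)$, and the class $H^{(k)} = c(D_{k-1} - D_{k-1}^*)$: here the identity $r(k)\cdot r(l)^* = e(k)\cdot e(l)^*$ underlying the skew-symmetry of $\R_{k\,l}$ enters crucially, as does the delicate distinction between an adjoined $0$ and a genuine vanishing last part $\lambda_r$, which is governed by the odd-length expansion \eqref{e.oddpf}. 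Following the pattern of \cite{K}, the bulk of this identity reduces to formal algebra, to be deferred to the appendix announced in the introduction.

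Finally, part (2) is a consequence of part (1). Specializing $X$ to the universal flag bundle makes the Chern classes $x_i, y_i$ algebraically independent in $\Gamma'[x,y]$, so the identification $[\Omega_{\triple}] = {\bf{R}}_{\triple}(x,y)$ determines ${\bf{R}}_{\triple}$ unambiguously from the geometry. For the two maximal triples listed before the theorem the polynomials $\frak{D}_{w(\triple)}$ and ${\bf{R}}_{\triple}$ agree directly from their definitions, and the standard argument propagates the equality to every vexillary $\triple$ via the difference operators $\partial_i$ and $\partial_{\hat{1}}$, which act on $\Gamma'[x,y]$ as computed in the text and match the corresponding birational correspondences among the $\Omega_{\triple}$.
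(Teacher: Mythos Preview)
Your resolution is not the paper's, and the difference is exactly where the type~D content lives. The paper abandons the projective-bundle tower $\P(F_{q_k}/D_{k-1})$ carried over from types B and C and instead works inside quadric bundles: one takes $Z_k = \P(E_{p_k}/D_{k-1}) \cap \P(F_{q_k}/D_{k-1})$ inside $\mathcal{Q}(D_{k-1}^\perp/D_{k-1})$, and the one-step pushforward is computed from the Edidin--Graham presentation of that quadric. The resulting formula is
\[
  (\phi_k)_*(h_k^m) \;=\; \frac{1}{2}\sum_{j\geq 0} (-1)^j c_{\lambda_k+m-j}(V - E_{p_k} - F_{q_k})\, H^{(k)}_j \;+\; \frac{1}{2}(-1)^{r-k}\,\delta_{m,0}\, c_{\lambda_k}(E/E_{p_k} + F/F_{q_k}).
\]
The second summand---present only when $m=0$, with a sign governed by the parity of $r-k$, and coming from the Edidin--Graham class of the quadric---is precisely what produces the two-series structure $c(k)\,|\,d(k)$ of the type~D Pfaffian. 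It has no counterpart in a projective-bundle calculation, and the appendix (Proposition~\ref{p.gysin}) is written specifically to compose operators of this shape.

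Your proposed mechanism, an order-two vanishing ``precisely when $D_k$ and $E_{p_k}$ land in the same family of maximal isotropic subbundles (that is, when $p_k=q_k=0$)'', does not make sense as stated: neither bundle is maximal isotropic in general, and $p_k,q_k$ are fixed by the triple rather than varying along $X$. A uniform double vanishing, as in type~B, would yield $2^{-r}\Pf_\lambda(d(1),\dots,d(r))$ in the type~C sense; but in type~D one has $\deg c(k)=\lambda_k$ rather than $\lambda_k-1$, so the hypothesis of Lemma~\ref{l.skewQkl} fails and that matrix is not even skew-symmetric, let alone a representation of ${\bf R}_{\triple}(x,y)$. For the same reason your displayed identity $\phi_*(h_1^{m_1}\cdots h_r^{m_r}) = 2^{-r}\Pf_{\lambda+m}(c|d)$ cannot hold for arbitrary $m$: the correction term contributes only when $m_k=0$, and the paper neither states nor needs anything beyond the case $m=0$.
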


The proof of the theorem is similar in spirit to the proofs in types C and B, but we need to work with quadric bundles instead of projective bundles.  As before, we may assume $\bk=1\;2\;\cdots\;r$.  We resolve the locus $\Omega_\triple\subseteq X$ by a sequence of maps
\[
  Z_r \xrightarrow{\phi_r} Z_{r-1} \xrightarrow{\phi_{r-1}} \cdots \xrightarrow{\phi_2} Z_1 \xrightarrow{\phi_1} X,
\]
where each $Z_k$ is a locus in a quadric bundle over $Z_{k-1}$.  To start, inside the quadric bundle $\mathcal{Q}(V)$ over $X$, we have $Z_1 = \P(E_{p_1}) \cap \P(F_{q_1})$.  Letting $D_1$ be the tautological bundle on $\mathcal{Q}(V)$ restricted to $Z_1$, we have the quadric bundle $\mathcal{Q}(D_1^\perp/D_1)$ on $Z_1$.  Then $Z_2 = \P(E_{p_2}/D_1) \cap \P(E_{q_2}/D_1) \subset \mathcal{Q}(D_1^\perp/D_1)$, and $D_2/D_1$ is the tautological bundle of $\mathcal{Q}(D_1^\perp/D_1)$ restricted to $Z_2$, and so on.

As before, the composed map $\phi\colon Z_r \to X$ is birational onto $\Omega_\triple$, so to prove the theorem, we need to show that $\phi_*(1)$ is equal to the Pfaffian ${\bf{R}}_\triple(x,y)$.  This is done inductively, as in type C (and in \cite{K}), factoring $\phi_*$ as $(\phi_1)_*\cdots(\phi_r)_*$.

The key step in the calculation is this.  For $1\leq k \leq r$, let $h_k = -c_1(D_k/D_{k-1})$, and $H^{(k)} = c(D_{k-1}-D_{k-1}^*)$, so
\[
  c(D_{k-1}-D_{k-1}^*) = \prod_{i=1}^{k-1} \frac{1-h_i}{1+h_i} = \prod_{i=1}^{k-1} (1-2h_i+2h_i^2 -\cdots) = 1 + H^{(k)}_1 + H^{(k)}_2 + \cdots.
\]
For the induction, it is necessary to compute the value $(\phi_k)_*$ on an arbitrary monomial in $h_1,\ldots,h_k$.  It suffices to do this for powers of $h_k$, using the type D analogue of Lemma~\ref{l.typeCstep}:

\begin{lem*}
For $1\leq k\leq r$, and for $m\geq 0$, we have
\begin{align*}
 (\phi_k)_*(h_k^m) &= \frac{1}{2}\left( \sum_{j\geq 0} (-1)^j c_{p_k+q_k+m-j}(V-E_{p_k}-F_{q_k})\, H^{(k)}_j \right) \\
  & \qquad + \frac{1}{2} (-1)^{r-k}\delta_{m,0}\, c_{p_k+q_k}(E/E_{p_k}+F/F_{q_k}),
\end{align*}
where $\delta_{m,0}$ is the Kronecker delta.
\end{lem*}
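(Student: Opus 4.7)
The plan is to adapt the type~C argument of Lemma~\ref{l.typeCstep}, factoring $\phi_k$ as $Z_k \xrightarrow{\iota_k} \QQ_k \xrightarrow{\pi_k} Z_{k-1}$, where $\QQ_k := \QQ(D_{k-1}^\perp/D_{k-1})$ is the quadric bundle of relative dimension $2n-2k$.  By the projection formula, it suffices to compute $h_k^m\cdot [Z_k]$ inside $H^*(\QQ_k)$ as a polynomial in $h_k$ with coefficients pulled back from $Z_{k-1}$, and then to apply a Gysin formula for $\pi_k$.  The two summands in the lemma will correspond to the principal and the exceptional contributions of that Gysin formula.

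For the first step, since $E_{p_k}/D_{k-1}$ and $F_{q_k}/D_{k-1}$ are isotropic, the subbundles $\P(E_{p_k}/D_{k-1})$ and $\P(F_{q_k}/D_{k-1})$ sit inside $\QQ_k$, and $Z_k$ is their scheme-theoretic intersection.  Exactly as in the type~B argument of Section~3, the natural sections of $(D_k/D_{k-1})^*\otimes (D_{k-1}^\perp/E_{p_k})$ and of $(D_k/D_{k-1})^*\otimes (D_{k-1}^\perp/F_{q_k})$ cutting these subbundles out of the ambient projective bundle vanish to order two when restricted to the quadric.  This doubling accounts for the overall factor $\tfrac{1}{2}$ in the lemma.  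A calculation parallel to the type~C one, combining the two top Chern-class formulas and reorganizing via $c(V-E_{p_k}-F_{q_k}) = c(V-E-F)\cdot c(E/E_{p_k})\cdot c(F/F_{q_k})$, then expresses $h_k^m\cdot[Z_k]$ in $H^*(\QQ_k)$ as an explicit polynomial in $h_k$ whose coefficients are pulled back from $Z_{k-1}$.

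The crucial second step is a Gysin formula for $\pi_k$.  Writing $W = D_{k-1}^\perp/D_{k-1}$, the formula I need is of the shape
\[
  (\pi_k)_*(h_k^{\,a}) \;=\; s_{a-(2n-2k)}(W) \;+\; (-1)^{r-k}\,\delta_{a,\,n-k}\,\cdot\, \varepsilon(W),
\]
where the first summand is the usual Segre contribution and $\varepsilon(W)$ is a ``half Euler'' class that distinguishes the two rulings of $\QQ_k$ by maximal isotropic subbundles.  The correction lives in the middle dimension $a = n-k$, so it contributes to $(\phi_k)_*(h_k^m)$ only when $m=0$.  Feeding the principal contribution through the polynomial computed above gives the main sum $\sum_{j\geq 0}(-1)^j c_{p_k+q_k+m-j}(V-E_{p_k}-F_{q_k})\,H^{(k)}_j$; after applying $W\cong W^*$ and rewriting via the identity for $c(V-E_{p_k}-F_{q_k})$, the exceptional term evaluates to $c_{p_k+q_k}(E/E_{p_k}+F/F_{q_k})$, with the sign $(-1)^{r-k}$ forced by the parity convention on $\dim(E\cap F)$ fixed just above Theorem~\ref{t.mainD}.

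The main obstacle is the quadric Gysin formula itself: identifying the half-Euler correction $\varepsilon(W)$ and establishing the sign $(-1)^{r-k}$.  Geometrically, the correction exists because $\QQ_k$ has two rulings by maximal isotropic subbundles, and the pushforward of the unit class depends on a choice of ruling; the stability convention pins down the ruling, and hence the sign, at every level of the tower $Z_r\to\cdots\to X$.  My plan is to prove the Gysin formula by a splitting-principle computation, using that $H^*(\QQ_k)$ is a rank-two module over $H^*(Z_{k-1})$ generated by $1$ and the class of a ruling, and to verify the sign consistency by induction on $k$ starting from the longest-element base case.  The algebra is elementary but lengthy and is best confined to the appendix.
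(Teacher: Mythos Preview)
Your overall strategy—factor $\phi_k$ through the inclusion $Z_k\hookrightarrow\QQ_k$ and the quadric-bundle projection $\pi_k\colon\QQ_k\to Z_{k-1}$, then compute $(\pi_k)_*(h_k^m\cdot[Z_k])$—is exactly what the paper does. But two of your intermediate claims are wrong and would block the argument.

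First, the assertion that the sections cutting out $\P(E_{p_k}/D_{k-1})$ and $\P(F_{q_k}/D_{k-1})$ ``vanish to order two when restricted to the quadric'' is not correct. That double vanishing is specific to type~B, where one works in a projective bundle and the determinantal equations cut out the reduced isotropic locus with multiplicity two. Here the sections you write down, restricted to $\QQ_k$, are simply non-regular: their rank exceeds the codimension of the linear subspace by one, so the top Chern class does not compute the class you want, and no multiplicity-two interpretation rescues it.

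Second, and this is where the argument actually breaks, $h_k^m\cdot[Z_k]$ cannot in general be written as a polynomial in $h_k$ over $H^*(Z_{k-1})$, and your Gysin formula for $(\pi_k)_*(h_k^a)$ is wrong. For an even-rank $W$ the cohomology of $\QQ(W)$ carries an extra generator in the middle codimension, coming from the two rulings, and the class of an isotropic $\P(L)\subset\QQ(W)$ involves it; already for $L$ maximal, $[\P(L)]$ is one of two classes whose sum is a power of $h$, not itself such a power. Meanwhile, honest powers of $h_k$ push forward by $(\pi_k)_*(h_k^a)=2\,s_{a-2(n-k)}(W)$ with \emph{no} correction—in particular $(\pi_k)_*(h_k^{n-k})=0$. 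The $(-1)^{r-k}\delta_{m,0}$ summand in the lemma appears because $[Z_k]$ itself carries the extra Edidin--Graham class, and it is the pushforward of \emph{that} class which produces the second term, not the pushforward of a power of $h_k$.

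The paper resolves both issues at once by invoking the Edidin--Graham presentation of the cohomology of $\QQ(W)$. Their extra generator, built from a chosen maximal isotropic subbundle, lets one write $[\P(E_{p_k}/D_{k-1})]$ and $[\P(F_{q_k}/D_{k-1})]$ explicitly, multiply, and push forward; the global $\tfrac12$ and the sign $(-1)^{r-k}$ both drop out of that calculation, the sign coming from the parity convention on $\dim(E\cap F)$ propagated down the tower.
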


The case $k=1$ is a direct calculation on the quadric bundle $\mathcal{Q}(V)$, using the presentation of Edidin and Graham \cite{EG}.  (Since $H^{(1)}=1$, the formulas are much simpler in this case.)  For $k>1$, it is the same, but one replaces $V$ with $D_{k-1}^\perp/D_{k-1}$, etc.

The formula in part (1) of Theorem~\ref{t.mainD} is a consequence of the Lemma together with the algebraic identities proved in the Appendix.  
Indeed, the Pfaffian formula for ${\bf{R}}_\triple(x,y)$ is precisely what appears in Proposition~\ref{p.gysin} after substituting $c(k)$ for $g(k)$.  
(The ring $A$ that appears there is the cohomology ring of $X$, and the ring $A_k$, or more precisely its finite-dimensional quotient, is the part of the cohomology ring of $Z_k$ that is generated over $A$ by the Chern classes $h_1,\ldots,h_k$.)

\smallskip

The diagonal trick to reduce the theorem to the case of maximal isotropics is almost 
the same as in type C, using $V' = V \oplus V$, with the symmetric bilinear form 
$\langle \, (u,v), (u',v') \, \rangle = \langle u, u' \rangle - \langle v , v' \rangle$.  
Again the diagonal $\Delta_V$ and $E \oplus F$ are isotropic, and the argument concludes 
as in type C.

\smallskip

As the type C theorem applies to symmetric maps of vector bundles, 
the type D theorem applies to skew-symmetric maps.  For this, let 
$E$ be a vector bundle of rank $n$, with a flag of 
subbundles $E = E_0 \supset E_1 \supset \dots $, with $E_p$ of 
corank $p$ in $E$.  Let $F$ be a subbundle of $E$, of corank $q$, and 
let $\phi \colon F \to E^*$ be a skew-symmetric map of vector bundles, 
which means that the induced composition $F \to E^* \to F^*$ is equal 
to minus its dual. (In characteristic $2$ one should also require that $\phi(f)(f) = 0$ 
for all $f$ in $F$.)
Given sequences of positive integers $k_1 < \dots < k_s$ 
and nonnegative integers $p_1 > \dots > p_s$, with  
$p_i - p_{i+1} > k_{i+1} - k_i$, we 
have the locus where the kernel $\Ker(\phi)$ meets $E_{p_i}$ in 
dimension at least $k_i$.  We assume that $r = k_s$ is congruent to $n$ 
modulo $2$; when $q = 0$, this can always be achieved by adding or 
removing a $0$ at the end of the string of $p_i$'s.
Define $\lambda$ to be the minimal strict 
partition (of type D) with $\lambda_{k_i} = p_i + q$.  
Let $c(k_i) = c(E/F)\cdot c(E/E_{p_i})$, 
set $c(k) = c(k_i)$ for $k_i \geq k$ minimal in general, and let $d(k) = c(E^*-E)\cdot c(k)$. 

\begin{cor*}
The class of the locus where $\Ker(\phi)$ meets $E_{p_i}$ in 
dimension at least $k_i$ is given by 
$\frac{1}{2^r}\Pf_\lambda(\, c(1)|d(1), \ldots, c(1)|d(r)\,)$.  
\end{cor*}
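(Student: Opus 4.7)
The strategy mirrors the type C corollary on symmetric maps: realize the given skew-symmetric data inside an orthogonal bundle where Theorem~\ref{t.mainD} applies. Form $V' = E \oplus E^*$ of rank $2n$, equipped with the non-degenerate symmetric pairing $\langle (e,\alpha),(e',\alpha')\rangle = \alpha(e') + \alpha'(e)$. Then $E' := E \oplus 0$ is a maximal isotropic subbundle, and the flag $E_\bullet$ induces an isotropic flag $E'_p := E_p \oplus 0$ on $V'$ of the corresponding coranks.

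Passing to a flag bundle over $X$ if necessary, choose a skew-symmetric extension $\psi\colon E \to E^*$ of $\phi\colon F \to E^*$ (that is, $\psi|_F = \phi$ and $\psi^* = -\psi$). The graph $F'_0 := \{(e,\psi(e)) : e \in E\}$ is then a maximal isotropic subbundle of $V'$, isomorphic to $E$ via the first projection, and it contains the graph $F'_q := \Gamma(\phi) = \{(f,\phi(f)) : f \in F\}$ as an isotropic subbundle of rank $n-q$. A point $(e,0) \in E'_{p_i}$ lies in $F'_q$ iff $e \in F \cap E_{p_i}$ and $\phi(e) = 0$, so
\[
  E'_{p_i} \cap F'_q = E_{p_i} \cap \Ker(\phi),
\]
and the degeneracy locus of the corollary coincides with $\Omega_{\bm{\tau}}$ on $V'$ for the type D triple $\bm{\tau} = (\bk, \bp, (q,\dots,q))$, whose associated partition has $\lambda_{k_i} = p_i + q$, matching the corollary. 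The Chern classes compute as $c(F'_0) = c(E)$, $c(F'_0/F'_q) = c(E/F)$, $c(E'/E'_{p_i}) = c(E/E_{p_i})$, and $c(V' - E' - F'_0) = c(E^*)/c(E) = c(E^*-E)$. Substituting into~\eqref{e.rtau} reproduces precisely $c(k_i) = c(E/F)\cdot c(E/E_{p_i})$ and $d(k_i) = c(E^*-E)\cdot c(k_i)$, so Theorem~\ref{t.mainD}(1) yields the asserted formula $\frac{1}{2^r}\Pf_\lambda(\,c(1)|d(1),\ldots,c(r)|d(r)\,)$.

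The main obstacle is the parity convention of Theorem~\ref{t.mainD}, which fixes the component of the orthogonal Grassmannian by requiring $\dim(E' \cap F'_0) \equiv r \pmod 2$. In our construction $E' \cap F'_0 = \Ker(\psi)$, and for a skew-symmetric map $\psi$ on a rank-$n$ bundle the parity of this dimension equals $n \bmod 2$. The hypothesis $r \equiv n \pmod 2$ in the corollary therefore places $F'_0$ in the correct family; when $q=0$, the flexibility noted in the statement to insert or delete a zero at the end of the $p$-sequence simply toggles $r$ by one, aligning it with the parity of $n$. Once this compatibility is secured, the Chern-class bookkeeping above delivers the formula.
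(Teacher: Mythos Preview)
Your proof is correct and follows the same approach as the paper: form $V=E\oplus E^*$ with the hyperbolic symmetric pairing, take $E_0=E\oplus 0$ and $F_q=\Gamma(\phi)$, and apply Theorem~\ref{t.mainD} with all $q_i=q$. Your explicit construction of $F_0$ as the graph of a skew-symmetric extension $\psi$ of $\phi$ (so that $E_0\cap F_0=\Ker\psi$ has dimension $\equiv n\equiv r\pmod 2$) makes precise what the paper compresses into a one-line remark about the parity of $\dim(E_0\cap F_0)$.
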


The proof is by setting $V = E \oplus E^*$, with its standard symmetric 
form: $\langle \, (e,\alpha), (e',\alpha') \, \rangle = \alpha(e') + \alpha'(e) $.  
Then $E = E \oplus 0$ is an isotropic subbundle of $V$, as is the graph $F_q$ of $\phi$.  Note that if $F_0$ is a maximal isotropic subbundle containing $F_q$, then the dimension of $E_0 \cap F_0$ has the same parity as $n$, and hence $r$.   
The formula is a special case of the main theorem, taking $q_i = q$ for all $i$. 

Again one has the special cases where $s = 1$ and where $q = 0$, which recover the 
formulas of \cite{K}.


\section*{Appendix on Pfaffian Algebra}

\setcounter{equation}{0}
\renewcommand{\theequation}{A.\arabic{equation}}
\setcounter{theorem}{0}
\renewcommand{\thetheorem}{A.\arabic{theorem}}

The goal of this appendix is to prove a formula for the composition of certain homomorphisms of power series rings, stated in Proposition~\ref{p.gysin} below.  The setting is purely algebraic, and the answer is a Pfaffian.

More details on Pfaffians can be found in \cite[Appendix D]{FP} or \cite{Knuth}; here we recall only the facts we need.  
For a skew-symmetric $r\times r$ matrix $M$, let $M_k$ be the submatrix with the $k$th row and column removed, and let $M_{k,l}$ be the submatrix with the $k$th and $l$th rows and columns removed.  When $r$ is even, the Pfaffian $\mathrm{Pf}(M)$ satisfies a Laplace-type expansion ``along the $l$th column,''
\[
  \mathrm{Pf}(M) = \sum_{k=1}^{l-1} (-1)^{k+l-1} m_{k,l}\, \mathrm{Pf}(M_{k,l}) + \sum_{k=l+1}^{r} (-1)^{k+l} m_{k,l}\, \mathrm{Pf}(M_{k,l}),
\]
which defines the Pfaffian inductively.  (Note that the expansions along the first and $r$th columns have slightly simpler formulas.)  For odd $r$, given a definition for $m_k$, we use the convention
\begin{equation}\label{e.oddpfaff}
  \mathrm{Pf}(M) = \sum_{k=1}^r (-1)^{k-1} m_k\, \mathrm{Pf}(M_k).
\end{equation}
This is equivalent to augmenting the matrix $M$ with a $0$th column and $0$th row, setting $m_{0,k}=-m_{0,k}=m_k$, and taking the Pfaffian of this augmented matrix.  The notation is slightly abusive, but the matrices we consider will always come with definitions for both $m_{k,l}$ and $m_k$.

The first general formula we prove is an identity of operators on rings of Laurent series.  In order to state it precisely, we first describe the ingredients.  Fix $r>0$, and a partition $\lambda$ of type D and length $r$.  Also fix a base ring $A$ (assumed to be commutative, with unit).  Consider the ring of formal Laurent series in variables $h_1,\ldots,h_r$ with coefficients in $A$; let $B$ be the result of adjoining polynomials in variables $u_1,\ldots,u_r$ to this ring.  For any subset $K\subseteq [r] := \{1,\ldots,r\}$, let $C_K \subseteq B$ be the subring where for $i\in K$, the variables $h_i$ appear with nonnegative exponents.  Finally, for $1\leq k\leq r$, let $B_k = C_{[k-1]}$ be the subring involving only nonnegative powers of the variables $h_1,\ldots,h_{k-1}$.  Note that $B=B_1$.

For any subsets $J\subseteq K\subseteq [r]$, we have an $A[u]$-algebra endomorphism $\zeta_J$ of $C_K$, defined by $\zeta_J(h_i)=0$ if $i\in J$, and $\zeta_J(h_i)=h_i$ if $i\not\in J$.  Since $\zeta_J$ is defined compatibly for $C_{K'} \subseteq C_K$ whenever $J\subseteq K\subseteq K'$, we can safely suppress its dependence on $K$ in the notation.  When $J_1$ and $J_2$ are disjoint subsets of $K$, we have $\zeta_{J_1}\zeta_{J_2}=\zeta_{J_2}\zeta_{J_1}=\zeta_{J_1\sqcup J_2}$.  Finally, we often write $\zeta_i$ for $\zeta_{\{i\}}$.

The function
\[
  f[i,j] = \frac{1-h_i/h_j}{1+h_i/h_j},
\]
considered an element of $C_{[r]\setminus \{i,j\} }$, is skew-symmetric in $i$ and $j$.  We set $f[k] = 1$.  For an index set $I\subseteq [r]$, let $M$ be the matrix with rows and columns indexed by $I$, with $m_{i,j}=f[i,j]$ and $m_k=f[k]=1$, and let $f[I]$ be the corresponding Pfaffian, with the interpretation of \eqref{e.oddpfaff} when the cardinality of $I$ is odd. 
As shown in \cite{K} Appendix C, the function $f$ satisfies the identity
\begin{equation}\label{e.fident}
  f[I] = \prod_{ \{i<j\} \in \binom{I}{2} } f[i,j].
\end{equation}

For our fixed partition $\lambda$, we define operators $\tilde{f}[i,j]\colon C_{\{i,j\}} \to B$ and $\tilde{f}[k]\colon C_{\{k\}} \to B$ by
\begin{align*}
  \tilde{f}[i,j]    &= h_i^{\lambda_i} h_j^{\lambda_j} f[i,j] + h_i^{\lambda_i} u_j^{\lambda_j} \zeta_j - u_i^{\lambda_i} h_j^{\lambda_j} \zeta_i - u_i^{\lambda_i} u_j^{\lambda_j} \zeta[i,j] \\
 \intertext{and}
 \tilde{f}[k] &= h_k^{\lambda_k} + u_k^{\lambda_k}\zeta_k,
\end{align*}
where $\zeta[i,j]$ is defined to be $\zeta_{\{i,j\}}$ when $i<j$ and $-\zeta_{\{i,j\}}$ when $i>j$.  This makes the operators $\tilde{f}[i,j]$ skew-symmetric in $i$ and $j$.  When $i<j$, we have
\begin{align}\label{e.fentry}
  \tilde{f}[i,j] &= (h_i^{\lambda_i} - u_i^{\lambda_i} \zeta_i )(h_j^{\lambda_j} + u_j^{\lambda_j}\zeta_j) + 2\sum_{k>0} (-1)^k h_i^{\lambda_i+k} h_j^{\lambda_j-k} .
\end{align}
When $i,j,k,l$ are distinct, the product $\tilde{f}[i,j]\,\tilde{f}[k,l] = \tilde{f}[k,l]\,\tilde{f}[i,j]$ is an operator from $C_{\{i,j,k,l\}}$ to $B$.  In particular, when $|K|$ is even, for any decomposition $K$ into disjoint pairs $\{i_p<j_p\}$, the product $\tilde{f}[i_1,j_1]\,\tilde{f}[i_2,j_2]\cdots$ is an operator $C_K \to B$.  Similarly, when $|K|$ is odd, a decomposition into such pairs together with a remainder $\{k\}$ yields an operator $\tilde{f}[k]\,\tilde{f}[i_1,j_1]\,\tilde{f}[i_2,j_2]\cdots$ from $C_K$ to $B$.  This implies that the Pfaffian of a matrix with rows and columns indexed by $K$, and with entries $\tilde{f}[i,j]$, is well-defined as an operator $C_K \to B$.

For a subset of indices $I=\{i_1,\ldots,i_p\}\subseteq [r]$, write $h^I = h_{i_1}^{\lambda_{i_1}} \cdots h_{i_p}^{\lambda_{i_p}}$, and similarly define $u^I = u_{i_1}^{\lambda_{i_1}}\cdots u_{i_p}^{\lambda_{i_p}}$.

Finally, we define notation for signs.  Given a subset $K = \{k_1<\cdots<k_s\} \subseteq [r]$, the parity of an element $k=k_i$ is defined to be the parity of $i$.  For $k\in K$, set
\begin{align*}
  \epsilon(k,K) &= \begin{cases} 1 &\text{ if }k\text{ is an even element of }K; \\ -1 &\text{ if }k\text{ is an odd element of }K, \end{cases} \\
\intertext{and, for a subset $J\subseteq K$, define a sign}
  \mathrm{sgn}(K,J) &= (-1)^{|J|\cdot|K|}(-1)^{\#(\text{odd elements of }J)}.
\end{align*}

We will need some identities satisfied by these signs.
\begin{lemma}
Set $s=|K|$, and consider a subset $J=\{j_1<\cdots<j_t\} \subseteq K$.
\begin{enumerate}
\item We have $\mathrm{sgn}(K,J) = (-1)^{\lfloor s/2\rfloor}\mathrm{sgn}(K,K\setminus J)$.

\medskip

\item If $s=|K|$ is odd, then we have
\[
  -\epsilon(j_p,K)\, \mathrm{sgn}(K\setminus j_p, J \setminus j_p) = (-1)^{p-1}\mathrm{sgn}(K,J),
\]
and therefore
\[
  \sum_{j\in J} -\epsilon(j,K)\, \mathrm{sgn}(K\setminus j, J\setminus j) = \begin{cases} \mathrm{sgn}(K,J) & \text{ if }|J|\text{ is odd;} \\ 0 & \text{ if }|J|\text{ is even.} \end{cases}
\]
\end{enumerate}
\end{lemma}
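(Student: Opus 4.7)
The plan is to dispatch both parts by direct calculation of parities; no clever combinatorial interpretation is needed.

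For part (1), I will simply multiply out the two signs. Since $\mathrm{sgn}(K,J)\,\mathrm{sgn}(K,K\setminus J) = (-1)^{|J|s + (s-|J|)s}(-1)^{\#\text{odd in }K} = (-1)^{s^2}(-1)^{\lceil s/2\rceil}$, it remains to check that $s + \lceil s/2\rceil \equiv \lfloor s/2\rfloor \pmod 2$. This is immediate by splitting into cases $s=2m$ (difference $2m$) and $s=2m+1$ (difference $2m+2$).

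For part (2), the key bookkeeping is to locate $j_p$ in two ways: it sits at position $p$ in $J$, and at some position $i$ in $K$, so $\epsilon(j_p,K) = (-1)^i$ and $j_p$ is odd in $K$ iff $i$ is odd. When $j_p$ is removed from $K$, the elements of $K$ at positions $>i$ shift down by one, hence have their $K$-parities flipped; elements at positions $<i$ keep their parities. Introducing $a,b$ = number of elements of $J$ at positions $<i$ in $K$ that are odd (resp.\ even), and $c,d$ for positions $>i$, one has $a+b = p-1$ and $c+d = t-p$, while the number of odd elements of $J\setminus\{j_p\}$ inside $K\setminus\{j_p\}$ equals $a+d$. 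Using $s$ odd (so $(-1)^{ts}=(-1)^t$ and $(s-1)$ even) to simplify both sides, the claimed identity reduces modulo $2$ to $i \equiv [\,j_p \text{ odd in } K\,] \pmod 2$, which is the definition. The second assertion then follows by summing $\sum_{p=1}^{t}(-1)^{p-1}$, which equals $1$ if $t$ is odd and $0$ if $t$ is even.

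The whole argument is essentially formal, and there is no real obstacle; the only thing that needs care is keeping track of how the parities of elements of $K$ change when one element is deleted, which is handled cleanly by splitting $J\setminus\{j_p\}$ according to position relative to $i$. I would present part (1) first as a one-line computation, then part (2) as the parity calculation above, and finally deduce the summation as a trivial consequence.
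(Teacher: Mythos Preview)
Your argument is correct. The paper states this lemma without proof, treating it as a routine sign computation, and your direct parity calculation is exactly the kind of verification one would expect; there is nothing to compare against, and your write-up would fill the gap cleanly.
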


The key algebraic identity is this:
\begin{proposition}\label{p.main}
Fix $K\subseteq [r]$.  For $i,j,k\in K$, set $m_{i,j} = \tilde{f}[i,j]$ and $m_k=\tilde{f}[k]$.  Let $M$ be the corresponding skew-symmetric matrix with rows and columns indexed by $K$.  We have an identity of operators $C_K \to B$
\begin{align}\label{e.main}
   \mathrm{Pf}( M ) &= \sum_{ I \sqcup J = K } \mathrm{sgn}(K,J)\, h^I\, u^{J}\,  f[I] \,\zeta_{J},
\end{align}
using \eqref{e.oddpfaff} to interpret the left-hand side when $|K|$ is odd.
\end{proposition}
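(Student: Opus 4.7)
The plan is to reduce Proposition~\ref{p.main} to a purely combinatorial Pfaffian identity by exploiting the multilinearity of the Pfaffian in its rows. Each entry $\tilde{f}[i,j]$ is a sum of four terms indexed by an assignment $\sigma_i, \sigma_j \in \{I, J\}$, and each singleton entry $\tilde{f}[k]$ is a sum of two. Expanding the Pfaffian as a sum over perfect matchings and swapping the order of summation, I would obtain
\[
\Pf(M) \;=\; \sum_{\sigma \colon K \to \{I, J\}} \Pf(N^\sigma),
\]
where $N^\sigma$ is the skew matrix whose $(i,j)$-entry is the term of $\tilde{f}[i,j]$ selected by $(\sigma_i, \sigma_j)$. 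Writing $(I, J) = (\sigma^{-1}(I), \sigma^{-1}(J))$ and factoring out $h^I u^J \zeta_J$ gives $\Pf(N^\sigma) = h^I u^J \zeta_J \cdot \Pf(N'_{I,J})$, where the simplified matrix $N'_{I,J}$ has entries $f[i,j]$ when $i,j \in I$ (for $i<j$), $+1$ when $i \in I$ and $j \in J$ (for $i<j$), and $-1$ when the smaller of $i,j$ lies in $J$ (and, in the odd-cardinality case, singleton entries equal to $1$). This reduces the proposition to the cleaner identity
\[
\Pf(N'_{I,J}) \;=\; \sgn(K, J)\, f[I].
\]

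I would prove this identity by induction on $|K|$, with the base cases $|K| \leq 1$ immediate. For the inductive step, expand $\Pf(N'_{I,J})$ along the column indexed by the largest element $k_s$, using the odd convention \eqref{e.oddpfaff} when $|K|$ is odd. The resulting sub-Pfaffians correspond to the splittings of $K \setminus \{k_p, k_s\}$ (or $K \setminus k_s$) induced by $(I, J)$, so the inductive hypothesis replaces each one by an expression of the form $\sgn(\cdots, J')\, f[I']$. The result is then matched against $\sgn(K, J)\, f[I]$ by invoking a Pfaffian self-expansion of $f[I]$---the usual pair expansion along column $k_s$ when $|I|$ is even, or the singleton expansion $f[I] = \sum_{m \in I}(-1)^{\mathrm{pos}_I(m)-1} f[I\setminus m]$ when $|I|$ is odd---combined with the sign identities in the lemma preceding Proposition~\ref{p.main}.

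The main obstacle will be the bookkeeping of signs: removing an element from $K$, $I$, or $J$ shifts the position parities of all larger elements, which cascades through $\sgn(K, J)$ in a predictable but delicate way, and the sign lemma is designed precisely for this. The most intricate subcase is even $|K|$ with both $|I|$ and $|J|$ odd, where the expansion produces terms proportional to $f[I\setminus k_s]$ (from the mixed $-1$ entries of $N'$) that must be identified with the $m = k_s$ contribution in the singleton expansion of $f[I]$, while the Pfaffians $f[I\setminus m]$ for $m \neq k_s$ have to be reconstructed from the ``diagonal'' terms $f[k_p, k_s]\, f[I \setminus \{k_p, k_s\}]$. Bridging these uses the Pl\"ucker-type identity $f[I]\cdot f[I\setminus\{k,l\}] = f[k,l]\cdot f[I\setminus k]\cdot f[I\setminus l]$, a direct consequence of the multiplicative formula \eqref{e.fident}. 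The remaining parity configurations---including the entire odd $|K|$ case, which uses only two-term expansions of $\tilde{f}[k_p]$ and a single application of part (2) of the sign lemma---admit a more direct verification.
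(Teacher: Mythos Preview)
Your overall strategy coincides with the paper's: both argue by induction on $|K|$, expanding the Pfaffian via Laplace (or the odd convention \eqref{e.oddpfaff}) and matching coefficients against $\sgn(K,J)\,f[I]$ using the sign lemma.  Your preliminary move---splitting each entry $\tilde f[i,j]$ into its four $(\sigma_i,\sigma_j)$-pieces and factoring out $h^I u^J \zeta_J$---cleanly isolates the scalar identity $\Pf(N'_{I,J}) = \sgn(K,J)\,f[I]$ at the outset, but this is a reorganization rather than a new route: the paper extracts exactly the same coefficient of $h^I u^J \zeta_J$ after one Laplace step, and from that point the two arguments are identical.

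Where your plan does not close is the ``intricate subcase'' (say $|K|$ even, $|I|$ and $|J|$ odd, $k_s\in I$).  Expanding $\Pf(N'_{I,J})$ along column $k_s$ produces terms $f[k_p,k_s]\,f[I\setminus\{k_p,k_s\}]$ for $k_p\in I$ together with a multiple of $f[I\setminus k_s]$; you propose matching these against the singleton expansion $f[I]=\sum_m (-1)^{\mathrm{pos}_I(m)-1} f[I\setminus m]$ and bridging via the Pl\"ucker relation.  That will not work: the Pl\"ucker identity is a \emph{multiplicative} relation among the $f[I\setminus S]$ and does not convert $f[k_p,k_s]\,f[I\setminus\{k_p,k_s\}]$ into the single additive term $f[I\setminus k_p]$ you need.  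The correct (and simpler) step---implicit in the paper's ``compare coefficients as in the odd case''---is to expand $f[I]$ by Laplace along the \emph{same} column $k_s$.  For odd $|I|$ this means the column of $k_s$ in the augmented matrix, giving
\[
  f[I] \;=\; f[I\setminus k_s] \;+\; \sum_{i_q\in I\setminus k_s} (-1)^{q}\, f[i_q,k_s]\,f[I\setminus\{i_q,k_s\}],
\]
which matches your terms directly and reduces the subcase to a verification with the sign lemma alone, no Pl\"ucker needed.
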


\begin{proof}
The proof is by induction on $s=|K|$.  First suppose $s$ is odd.  The LHS is
\[
  \sum_{p=1}^s (-1)^{p-1} (h_{k_p}^{\lambda_{k_p}}+u_{k_p}^{\lambda_{k_p}} \zeta_{k_p}) \mathrm{Pf}(M_{k_p}),
\]
and by induction, we know the formula for $\mathrm{Pf}(M_{k_p})$.  For $I\sqcup J = K$, the coefficient of $h^I u^J \zeta_J$ in this expression is
\begin{equation}\label{e.twosums}
  \sum_{i\in I} -\epsilon(i,K)\,  \mathrm{sgn}( K\setminus i, J ) f[I\setminus i] + \sum_{j\in J} -\epsilon(j,K)\, \mathrm{sgn}( K\setminus j, J\setminus j ) f[I].
\end{equation}
We need to show that this is equal to $\mathrm{sgn}(K,J)\, f[I]$, the corresponding coefficient on the RHS.

When $|I|$ is odd, so $|J|$ is even, the second sum in \eqref{e.twosums} is zero.  To see that first sum is equal to $\mathrm{sgn}(K,J)\,f[I]$, by the previous lemma we have
$\mathrm{sgn}(K\setminus i , J) = (-1)^{(s-1)/2} \mathrm{sgn}(K\setminus i, I\setminus i)$, so
\[
 (-1)^{(s-1)/2} \mathrm{sgn}(K,I) \sum_{p=1}^{|I|} (-1)^{p-1} f[I\setminus i_p] = \mathrm{sgn}(K,J) f[I].
\]

When $|I|$ is even, so $|J|$ is odd, the second of these sums is equal to $\mathrm{sgn}(K,J)\,f[I]$.  We claim that the first sum is zero in this case.  To see this, expand $f[I\setminus i]$ and write the sum as
\[
  \pm \sum_{p=1}^{|I|} (-1)^{p-1} \left( \sum_{q=1}^{p-1}(-1)^{q-1}f[I\setminus\{i_p,i_q\}] + \sum_{q=p+1}^{|I|} (-1)^q f[I\setminus\{i_p,i_q\}] \right) ,
\]
where the sign in front is $(-1)^{(s-1)/2}\mathrm{sgn}(K,I)$.  For each fixed $p$, the summand is $f[I]$, by the Laplace expansion.  Since there are $|I|$ summands of alternating sign, the sum is zero.

When $s$ is even, use the Laplace expansion to write the LHS of \eqref{e.main} as a sum of size $s-2$ Pfaffians.  Then use induction and compare coefficients as in the odd case.
\end{proof}

\subsection*{An algebraic pushforward}

For each $k$, set $A_k = A[[h_1,\ldots,h_{k}]]$.  We will consider algebraic ``pushforward'' homomorphisms $(\phi_k)_*\colon A_k \to A_{k-1}$.  The main formula computes the composition of these in terms of a certain multi-Schur Pfaffian.

We describe the Pfaffian first.  Fix a partition $\lambda$ of type D and length $r$.  
Assume that the base ring $A$ is graded, and for each $1\leq k\leq r$, suppose we are given series of (commuting) elements $d(k) = 1+d(k)_1+d(k)_2+\cdots$ and $g(k)=1+g(k)_1+g(k)_2+\cdots$.  The $g(k)$ are required to be finite, with $g(k)_m=0$ for $m>\lambda_k$, and each $c(i)$ divides $d(i)$, as well as $g(j)$ for $j>i$.  Furthermore, the $d$'s and $g$'s are required to satisfy the relations $d(k) d(l)^* = g(k) g(l)^*$, where $d(l)^* = 1 - d(l)_1 + d(l)_2 - \cdots$.

The multi-Schur Pfaffian we need is defined along the lines of Section 4 in the text.  Form an $r\times r$ matrix $M$ with
\[
  m_{k,l} = (d(k)_{\lambda_k}-g(k)_{\lambda_k})( d(l)_{\lambda_l}+g(l)_{\lambda_l} )  + 2\sum_{j=1}^{\lambda_l} (-1)^j d(k)_{\lambda_k+j}\, d(l)_{\lambda_l-j},
\]
and set $m_k = d(k)_{\lambda_k} + g(k)_{\lambda_k}$.  The relations on $d$ and $g$ guarantee that $M$ is skew-symmetric, and we write $\Pf_\lambda(\, g(1)|d(1),\ldots, g(r)|d(r) \,)$, for its Pfaffian, using \eqref{e.oddpfaff} to interpret this when $r$ is odd.

The homomorphisms $(\phi_k)_*$ are $A_{k-1}$-linear, and are defined by\footnote{If the elements $d(k)_m$  are zero for all sufficiently large $m$---say, for $m\geq N$---then the same formulas define maps $A_k/I\, A_k \to A_{k-1}/I\, A_{k-1}$, 
where $I$ is the ideal generated by all monomials in $h$ of degree $N$.  We only point this out in case one prefers to work with finite-dimensional extensions of $A$, such as the those that appear as cohomology rings in the geometric setting.} 
\[
  (\phi_k)_*(h_k^m) = \frac{1}{2}\left( \sum_{j\geq 0} (-1)^j d(k)_{\lambda_k+m-j}\, H^{(k)}_j \right) + \frac{1}{2} (-1)^{r-k}\delta_{m,0}\, g(k)_{\lambda_k},
\]
where $H^{(k)} = \prod_{i=1}^{k-1}\frac{1-h_i}{1+h_i}$.

\begin{proposition}\label{p.gysin}
Given a partition $\lambda$ of type D and homomorphisms $(\phi_k)_*$ as above, we have
\begin{equation}\label{e.formula2}
  (\phi_1)_*\cdots(\phi_r)_*(1) = \frac{1}{2^r}\Pf_\lambda( \,g(1)|d(1),\ldots,g(r)|d(r) \,).
\end{equation}
\end{proposition}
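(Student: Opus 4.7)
The plan is to deduce this proposition from Proposition~\ref{p.main} by a substitution argument. Define a linear map $\psi\colon B\to A$ on monomials by
$\psi\bigl(\prod_i h_i^{a_i}\prod_j u_j^{b_j}\bigr)=\prod_i d(i)_{a_i}\prod_j g(j)_{b_j}$,
with the convention $d(i)_n=0$ for $n<0$. A direct comparison with \eqref{e.fentry} shows $\psi(\tilde{f}[i,j](1))=m_{i,j}$ and $\psi(\tilde{f}[k](1))=m_k$. Since every term in a Pfaffian expansion pairs entries supported on disjoint subsets of the $h$- and $u$-variables, $\psi$ acts multiplicatively on these products, so
$\psi(\Pf(M)(1)) = \Pf_\lambda(\,g(1)|d(1),\ldots,g(r)|d(r)\,)$.

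Applying Proposition~\ref{p.main} to $1$ (noting $\zeta_J(1)=1$) and then $\psi$ yields
\[
 \Pf_\lambda(g|d) \;=\; \sum_{I\sqcup J=[r]}\mathrm{sgn}([r],J)\,\prod_{j\in J}g(j)_{\lambda_j}\cdot \psi(h^I f[I]).
\]
It remains to identify this with $2^r(\phi_1)_*\cdots(\phi_r)_*(1)$. Decompose $(\phi_k)_* = (\phi_k)_*|_D + (\phi_k)_*|_G$, where the $G$-part is $T\mapsto \tfrac{1}{2}(-1)^{r-k}g(k)_{\lambda_k}\,T|_{h_k=0}$. Distributing over the $r$-fold composition produces $2^r$ terms indexed by the subset $S\subseteq[r]$ of $G$-choices; after pulling out the overall $1/2^r$, the contribution of $S$ becomes
$\prod_{k\in S}(-1)^{r-k}g(k)_{\lambda_k}\cdot \mathcal{D}_{\bar S}(1)$,
where $\mathcal{D}_I$ denotes the composition (in order $k=r,r-1,\ldots,1$) of $\tilde{D}_k := 2\cdot(\phi_k)_*|_D$ for $k\in I$ and of the specialization $h_k\mapsto 0$ for $k\notin I$. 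An elementary sign calculation gives $\prod_{k\in S}(-1)^{r-k}=\mathrm{sgn}([r],S)$, so with $J=S$ and $I=\bar S$ the proof reduces to the algebraic identity
\[
 \mathcal{D}_I(1) \;=\; \psi(h^I f[I]).
\]

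This last identity will be the hard part of the argument, and I would prove it by induction on $|I|$, leveraging the factorization $f[I]=\prod_{i<j\in I}f[i,j]$ from \eqref{e.fident}. The base cases $I=\emptyset$ and $|I|=1$ follow by direct inspection. For the inductive step, apply $\tilde{D}_k$ at the largest $k\in I$ first; writing its effect as the coefficient extraction
$[z^{\lambda_k}]\,d(k)(z)\,T(z^{-1})\,\prod_{i<k}(1\mp zh_i)/(1\pm zh_i)$,
one sees that the factors with $i\notin I$ will be annihilated by subsequent $h_i\mapsto 0$ specializations, while the factors with $i\in I\cap[k-1]$ produce, after the remaining $\tilde{D}_i$-steps and the final $\psi$-substitution, exactly the contribution $\prod_{i\in I\cap[k-1]}f[i,k]$ to $f[I]$; the remaining factor $\prod_{i,j\in I\cap[k-1]}f[i,j]$ then matches by the inductive hypothesis applied to $I\setminus\{k\}$. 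The main obstacle is the combinatorial bookkeeping of exponents: one must verify that the positive and negative powers of each $h_i$ generated along the way either combine with the $\lambda_i$-shift (producing shifted $d(i)$-values) or vanish under the $G$-specializations, so that the output faithfully reconstructs the Laurent expansion of $\psi(h^I f[I])$ monomial by monomial.
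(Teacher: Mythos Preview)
Your approach is correct and follows the same overall strategy as the paper: define the substitution map $\psi$ (which is exactly the paper's $\rho_1$), expand the $r$-fold composition into $2^r$ terms indexed by the subset $J$ where the $G$-part is chosen, and match this with the right-hand side of Proposition~\ref{p.main} applied to~$1$.

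The one organizational difference is where the expansion is carried out. You expand $(\phi_1)_*\cdots(\phi_r)_*$ after pushing down into $A$ via $\psi$, which leaves you with the residual identity $\mathcal{D}_I(1)=\psi(h^I f[I])$ that you then propose to prove by a separate induction. The paper instead lifts each $(\phi_k)_*$ to an operator $\Phi_k=\tfrac12\bigl(h_k^{\lambda_k}\prod_{i<k}f[i,k]+(-1)^{r-k}u_k^{\lambda_k}\zeta_k\bigr)$ on the Laurent series ring $B$, so that $(\phi_1)_*\cdots(\phi_r)_*=\rho_1\,\Phi_1\cdots\Phi_r$ \emph{before} applying $\rho_1$. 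At this level the expansion is immediate: choosing the first summand of $\Phi_k$ for $k\in I$ and the second for $k\in J$, the operators $\zeta_j$ kill each factor $f[j,k]$ with $j\in J$, and what survives is exactly $\mathrm{sgn}([r],J)\,h^I f[I]\,u^J\zeta_J$ by \eqref{e.fident}. Thus your ``hard part'' becomes a one-line observation, and the whole proof collapses to a single application of Proposition~\ref{p.main} followed by $\rho_1$. Your inductive sketch would work, but the lift to $B$ replaces it entirely.
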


To prove the proposition, recall the rings $B_r \subset \cdots \subset B_1=B$ (of Laurent series in $h$ and polynomials in $u$).  Define an $A_{k-1}$-linear homomorphism\footnote{As with the homomorphisms $(\phi_k)_*$, when the elements $d(k)_m$ are zero for all sufficiently large $m$, the same formula defines a map $\bar\rho_k\colon B_k/I\, B_k \to A_{k-1}/I\, A_{k-1}$.} $\rho_k\colon B_k \to A_{k-1}$
by
\[
  \rho_k(h_k^{m_k}\cdots h_r^{m_r}\,u_1^{n_1}\cdots u_r^{n_r}) = d(k)_{m_k}\cdots d(r)_{m_r} g(1)_{n_1} \cdots g(r)_{n_r}.
\]
Using $\rho$ and $\zeta$, the homomorphism $(\phi_k)_*$ can be rewritten as
\begin{align*}
  (\phi_k)_*(h_k^m) &= \frac{1}{2}\rho_k\left( \sum_{j\geq 0} (-1)^j h_k^{\lambda_k+m-j}\, H^{(k)}_j + (-1)^{r-k} u_k^{\lambda_k} \zeta_k(h_k^m) \right) \\
  &= \frac{1}{2}\rho_k\left( h_k^{\lambda_k} \prod_{i=1}^{k-1} \frac{1-h_i/h_k}{1+h_i/h_k} + (-1)^{r-k} u_k^{\lambda_k} \zeta_k \right)(h_k^m).
\end{align*}
That is, after applying $\rho_k$, the homomorphism $(\phi_k)_*$ becomes multiplication by the operator $\Phi_k := (1/2)\left( h_k^{\lambda_k} \prod_{i=1}^{k-1} \frac{1-h_i/h_k}{1+h_i/h_k} + (-1)^{r-k} u_k^{\lambda_k} \zeta_k \right)$.

Using linearity, the composition $(\phi_{k-1})_*(\phi_k)_*$ acts by sending $h_{k-1}^m h_k^n$ to
\begin{align*}
 \rho_{k-1}\left( \Phi_{k-1}\, h_{k-1}^m \rho_k( \Phi_k\, h_k^n )  \right)  = \rho_{k-1}\left( \Phi_{k-1}\Phi_k h_{k-1}^m h_k^n \right) ,
\end{align*}
and similarly, the composition $(\phi_1)_*\cdots(\phi_r)_*$ multiplies $h_1^{m_1}\cdots h_r^{m_r}$ by the product of these operators, $\rho_1\,\Phi_1\cdots\Phi_r$.  This product expands as
\begin{align}\label{e.expanded}
\frac{1}{2^r}\rho_1\left( \sum_{ I \sqcup J = [r] } \mathrm{sgn}([r],J)\, h^I\, u^{J} \prod_{ \{i<j\} \in \binom{I}{2} } \frac{1-h_i/h_j}{1+h_i/h_j}\, \zeta_{J} \right),
\end{align}
using $\mathrm{sgn}([r],J) = (-1)^{\sum \{r-j\,|\, j\in J\}}$.  The operator inside the parentheses in this expression is exactly the RHS of the identity \eqref{e.main} of Proposition~\ref{p.main}, using the identity \eqref{e.fident} to substitute $f[I]$.

Recall the matrix $M$ used to define the right-hand side of \eqref{e.formula2}.  By construction, we have $m_k=\rho_1(\tilde{f}[k])$, and by \eqref{e.fentry}, we have $m_{i,j}=\rho_1(\tilde{f}[i,j])$.  Linearity of $\rho_1$ combined with Proposition~\ref{p.main} (using the case $K=[r]$) proves the identity \eqref{e.formula2}. \qed

\section*{Acknowledgements}
The first and second authors were partially supported by NSF grants DMS-0902967 and DMS-0502170.


\end{document}